
\documentclass[final,leqno,onefignum,onetabnum]{siamltex1213}
\usepackage{bbm}
\usepackage{amsmath,amsfonts,amssymb,enumerate}
\usepackage{color}
\usepackage{lscape}
\usepackage{latexsym}
\usepackage{multirow}
\usepackage{rotating}
\usepackage{threeparttable}
\usepackage{graphicx}
\usepackage{algpseudocode,algorithm}
\usepackage[margin=1in]{geometry}

\linespread{1.2}

\graphicspath{{figs/}}
\definecolor{doushalv}{rgb}{0.78,0.93,0.80}

\title{ Two-step Fixed-point Proximity Algorithms for Multi-block Separable Convex Problems\thanks{This research is supported in part by Guangdong Provincial Government of China through the ``Computational Science Innovative Research Team'' program, by the Natural Science Foundation of China under grants 11501584 and 11471013, and by the
Natural Science Foundation of Guangdong Province under grants 2014A030310332 and 2014A030310414.}}

\author {Qia Li 
\footnotemark[2]
\and Yuesheng Xu 
\footnotemark[2] \footnotemark[4]
\and
Na Zhang \footnotemark[3]
}

\begin{document}

\maketitle
\renewcommand{\thefootnote}{\fnsymbol{footnote}}
\footnotetext[3]{Department of Applied Mathematics, College of Mathematics and Informatics, South China Agricultural University, Guangzhou 510640, P. R. China. (\email{nzhsysu@gmail.com}). Questions, comments, or corrections to this document may be directed to that email address.}
\footnotetext[2]{Guangdong Province Key Laboratory of Computational Science, School of Data and  Computer Sciences, Sun Yat-sen University, Guangzhou 510275, P. R. China.}
\footnotetext[4]{This author is also a professor emeritus of mathematics in Syracuse University, Syracuse, NY 13244, USA.}

\begin{abstract}
\normalsize
Multi-block separable convex problems  recently received considerable attention. This class of optimization problems minimizes a separable convex objective function with linear constraints. The algorithmic challenges come from the fact that the classic alternating direction method of multipliers (ADMM) for the problem is not necessarily convergent. However, it is observed that  ADMM outperforms numerically many of its variants with guaranteed theoretical convergence. The goal of this paper is to develop convergent and computationally efficient algorithms for solving multi-block separable convex problems. We first characterize the solutions of the optimization problems by proximity operators of the convex functions involved in their objective function. We then design a two-step fixed-point iterative scheme for solving these problems based on the characterization. We further prove  convergence of the iterative scheme and show that it has $O(\frac{1}{k})$ convergence rate in the ergodic sense and the sense of the partial primal-dual gap, where $k$ denotes the iteration number. Moreover, we derive specific two-step fixed-point proximity algorithms (2SFPPA)  from the proposed iterative scheme and establish their global convergence.  Numerical experiments for solving the sparse MRI problem demonstrate the numerical efficiency of the proposed 2SFPPA.
\end{abstract}

\begin{keywords}
Multi-block separable convex problems, Fixed-point proximity algorithms, Two-step algorithms
\end{keywords}

\begin{AMS}
90C25, 65K05
\end{AMS}

\pagestyle{myheadings}
\thispagestyle{plain}

\section{Introduction}
We consider in this paper the convex minimization problem with linear constraints and a separable objective function in the form of the sum of several convex functions. For a positive  integer $d$, by $\mathbb{R}^d$ we denote the usual $d$-dimensional Euclidean space. The minimization problem we consider in this paper has the form \begin{equation}\label{model}
\min\left\{\sum _{i=1}^s f_i(x_{i}): \sum_{i=1}^s A_i x_i=b, x_i \in \mathbb{R}^{n_i}, i=1,2,\dots,s\right\},
\end{equation}
where $f_i: \mathbb{R}^{n_i}
\rightarrow \overline{\mathbb{R}}:=\mathbb{R}\cup\{+\infty\}$ is a
proper lower semicontinuous  convex function, $A_i$ is a given $m \times n_i$ real matrix, $n_i$ is the dimension of variable $x_i$, for $i=1,2,\dots,s$ and  $b\in\mathbb{R}^m$ is a given vector. Here, variable $x$ is decomposed into $s$ blocks, that is $x:=(x_1,x_2,\dots,x_s)$.

Many problems arising from image processing and machine learning can be cast into the form of model \eqref{model}. For example, the total-variation based image denoising model \cite{nikolova:SIAM-AM:00,rudin:icip:94}, sparse representation based image restoration \cite{Cai-Chan-Shen:ACHA:08,chan:sjsc:03,chan:acha:04,Li-Shen-Yang:ACHA:2012}, lasso regression \cite{Tibshirani:Lasso1996Regression} and support vector machines \cite{Vapnik:SVM} are special cases of problem \eqref{model} with $s=2$. In addition,  we also refer to \cite{Li-Zhang:ACHA:2015,Lustig-Donoho-Pauly:MRM:07,Ruszczy1993Parallel,Tibshirani2005Sparsity} for some applications of model \eqref{model} with $s\ge 3$.

The alternating direction method of multipliers (ADMM) \cite{Gabay1976Mercier} was originally proposed for solving  problem \eqref{model} with $s=2$, and was recently widely used in the area of image processing \cite{Cai-Osher-Shen:SAIMIS:09,Goldstein-Osher:SAIMIS:09,Sawatzky2014Proximal,Wen-Yin:ADMMSemiDef:MathProg2010}. Since ADMM requires inner iterations to solve its subproblems of ADMM,  its linearized version (LADMM) was proposed and was successfully  used in applications \cite{Esser-Zhang-Chan:1order-PD-09}.  As $s\ge 3$, one can directly extend the original ADMM (LADMM) to  problem \eqref{model}. Without an additional assumption, however, it was recently  shown in \cite{Chen-He-Ye-Yuan:MathProg2014} that the direct extension of ADMM to multi-block convex problems is not necessarily convergent, although it may work well in practice. Very recently, there were some investigations \cite{Davis-Yin:3operator,He-yuan:admmstrongly,Li-Sun-Toh:proxADMM2015,Lin-Ma-ZHang:admmrate} on  convergence of the extension of ADMM under some additional assumptions. Some researchers dedicated to modify  ADMM or LADMM to make it convergent. For instance, the Jacobian-type ADMM was proposed in \cite{deng-yin:parallelADMM} for parallel computing, the semi-proximal ADMM proposed in  \cite{Li-Sun-Toh:proxADMM2015,Sun-Toh-Yang:2014A} is for convex quadratic programming and conic programming, the Gaussian back substitution technique was proposed in \cite{He-Yuan:admmGBS:SIAMOPT,He-Yuan:LADMGBS:2013} to make ADMM and LADMM converge. It was shown in  \cite{He-Yuan:admmGBS:SIAMOPT,He-Yuan:LADMGBS:2013} the attractiveness of the Gaussian back substitution technique for theoretical analysis on  convergence of ADMM-type algorithms. However,  the numerical results show that the correction step is time consuming and the ADMM (LADMM) with Gaussian back substitution may require more iterations than the direct extension of ADMM (LADMM) to achieve the same  objective function value. Therefore, in this paper, we dedicate to establishing convergent and efficient algorithms.

As shown in  \cite{Attouch-Combettes:ForBack2010,Li-Shen-Xu-Zhang:AiCM:14,Li-Zhang:ACHA:2015,Micchelli-Shen-Xu:IP-11}, the notion of proximity operators provides  a useful tool for the algorithmic development due to its firmly nonexpansive property. ADMM was shown in \cite{Li-Shen-Xu-Zhang:AiCM:14} a special case of the  proximity algorithms.
Although the one-step fixed-point proximity algorithms proposed in \cite{Li-Shen-Xu-Zhang:AiCM:14} can be  applied to  model \eqref{model} directly, they do not utilize the separable property of the objective function, that is, the variable $x_1, x_2,\dots, x_s$ are updated simultaneously. In contrast,  ADMM takes advantage of the separability of the objective function and utilizes the block-wise Gauss-Seidel technique.
Thus, in order to develop convergent  algorithms for  problem \eqref{model},  we propose to develop  two-step fixed-point proximity algorithms. The term two-step means that when we update  values of the next step, we not only use  values of the current step  but also those of the previous step.
In one of our previous papers \cite{Li-Shen-Xu-Zhang:AiCM:14}, we designed a multi-step iterative scheme, introduced  the notions of weakly firmly nonexpansive operators and Condition-M (Semi-Condition-M), and presented the convergence results of the multi-step scheme with the help of the  notions. In this paper, we will follow the idea of \cite{Li-Shen-Xu-Zhang:AiCM:14} to develop convergent two-step fixed-point proximity algorithms.

This paper has the following contributions. First, we present  a characterization of the solutions of  problem \eqref{model} by fixed-points of a proximity related operator and  develop a two-step fixed-point iterative scheme based on the fixed-point equation.  Second, we prove  convergence of the proposed iterative scheme by the notions of weakly firmly nonexpansive and Condition-M proposed in \cite{Li-Shen-Xu-Zhang:AiCM:14}. We prove that as long as the matrices involved in the scheme satisfy Condition-M,  which can be easily verified, the iterative scheme  converges and the sequence $\{(x_1^k, \dots, x_s^k): k\in\mathbb{N}\}$ generated by the proposed algorithm  converges to a solution of  problem \eqref{model}.  Third, we analyze the convergence rate of the proposed iterative scheme. We prove that the scheme has $O(\frac{1}{k})$ ergodic convergence rate. In addition, the average of the sequence generated by the proposed scheme has $O(\frac{1}{k})$ convergence rate in the sense of  the primal-dual gap. Fourth, several specific convergent algorithms are designed from the iterative scheme, including the two-step implicit and explicit fixed-point proximity algorithms as well as their variants. Furthermore, we apply the proposed two-step fixed-point proximity algorithm  to the sparse MRI reconstruction problem. Numerical results show that the proposed two-step fixed-point proximity algorithm performs as efficiently as the direct extension of LADMM, which is not necessarily convergent.

We organize this paper in eight sections. In Section \ref{sec:characterization}, we characterize the solutions of  problem \eqref{model} by fixed-points of a proximity related operator. Based on this characterization, we develop in Section \ref{sec:algorithm} a two-step iterative scheme and prove its  convergence in Section \ref{sec:converge}. In Section \ref{sec:ConvRate}, we analyze the convergence rate of the proposed iterative scheme. We design several specific algorithms from the iterative scheme in Section \ref{sec:specificAlg} and apply in Section \ref{sec:exp} one of them to the sparse MRI reconstruction problem. We conclude this paper in Section \ref{sec:conclusion}.

\section{A Characterization of  Solutions of the Minimization Problem}\label{sec:characterization}

In this section we  present a characterization of
solutions of model \eqref{model} in terms of a system of fixed-point equations via the
proximity operators of the functions involved in the objective function. The system of fixed-point equations will serve
as a basis for developing iterative schemes for solving
the problem.

We now recall the notion of the proximity operator of a convex
function. For $x$ and $y$ in $\mathbb{R}^d$, we denote the standard inner product by $\langle x, y \rangle:=\sum_{i\in\mathbb{N}_d}x_iy_i$, where $\mathbb{N}_d:=\{1,2,\dots,d\}$ and the standard $\ell_2$-norm by $\|x\|_2:=\langle x,x\rangle^\frac{1}{2}$. By   $\mathbb{S}^d_{+}$ we denote the
set of symmetric positive   definite matrices.
For an $H\in\mathbb{S}^d_{+}$ the $H$-weighted inner product is defined
by $ \langle x,y \rangle_H := \langle x,Hy \rangle $  and the
corresponding $H$-weighted $\ell_2$-norm is defined by $\|x\|_{H}:={\langle x,x
\rangle_H}^{\frac12}$. For a $d\times \ell$ matrix $A$, we define $\|A\|_2$ as the largest singular value of $A$. By $\Gamma_0(\mathbb{R}^d)$ we denote the
class of all lower semicontinuous proper convex functions $\varphi: \mathbb{R}^d
\rightarrow \overline {\mathbb{R}}$. For a function $\varphi \in
\Gamma_0(\mathbb{R}^d)$, the proximity operator of $\varphi$ with respect
to a given matrix $H \in \mathbb{S}^d_{+}$, denoted by
$\mathrm{prox}_{\varphi,H}$, is a mapping from $\mathbb{R}^d$ to itself,
defined for a given point $x \in \mathbb{R}^d$ by
\begin{equation}\label{def:prox}
\mathrm{prox}_{\varphi,H} (x):=\mathop{\mathrm{argmin}} \left\{\frac{1}{2} \|u-x\|^2_H + \varphi(u): u \in \mathbb{R}^d \right\}.
\end{equation}
In particular, we use $\mathrm{prox}_{\varphi}$ for $\mathrm{prox}_{\varphi,I}$.

The proximity operator of a function is intimately related to its
subdifferential. The subdifferential of a function $\varphi$ at a given
vector $x \in \mathbb{R}^d$ is the set defined by
$$
\partial \varphi(x):=\{y:~y\in \mathbb{R}^d,\ \mbox{and} \ \varphi(z)\geq \varphi(x)+\langle y,z-x\rangle, \  \mbox{for all} \ z\in \mathbb{R}^d \}.
$$
We remark that if a function $\varphi \in \Gamma_0(\mathbb{R}^d)$ is Fr\'{e}chet
differentiable at a point $x\in\mathbb{R}^d$ then $\partial \varphi(x)=\{\nabla
\varphi(x)\}$, where $\nabla \varphi (x)$ is the Fr\'{e}chet gradient of $\varphi$.
It is shown that for any $H\in\mathbb{S}_+^d$, $x\in\mathrm{dom}(\varphi)$ and $y \in \mathbb{R}^d$,
\begin{equation}\label{eq:sub-prox-0}
Hy\in\partial {\varphi}(x) \quad \mbox{if and only if} \quad
x=\mathrm{prox}_{\varphi, H}(x+y).
\end{equation}
For a discussion of this relation, see, e.g.,  \cite[Proposition 16.34]{Bauschke-Combettes:11} or  \cite{Micchelli-Shen-Xu:IP-11}.

The proximity operator plays a crucial role in convex analysis and applications (see, e.g., \cite{moreau:RASPS:62,Rockafellar:SIAMCO:1976}).   Recall  that  operator $J$ is called firmly nonexpansive (resp., nonexpansive) with respect to a given  matrix $H\in\mathbb{S}_+^d$ if for all $x, y\in\mathbb{R}^d$
 $$
 \|Jx-Jy\|_H^2\le \langle Jx-Jy, x-y\rangle_H ~~(\mathrm{resp.},~~ \|Jx-Jy\|_H\le \|x-y\|_H).
 $$
 We remark here that the symmetric positive definite matrix  $H$ defines specific inner-product of the Hilbert space $\mathbb{R}^d$ and if $H=I$ we do not specify the matrix $H$ for simplicity.  As shown in \cite{Bauschke-Combettes:11}, the proximity operator of a convex function is firmly nonexpansive and is contractive when the function is strongly convex.

We also need the notion of the conjugate function. The conjugate of
$\varphi\in \Gamma_0(\mathbb{R}^d)$ is the function $\varphi^* \in
\Gamma_0(\mathbb{R}^d)$ defined at $y \in \mathbb{R}^d$ by
$
\varphi^*(y):= \sup\{\langle x, y \rangle-\varphi(x): x\in \mathbb{R}^d\}.
$
A characterization of the subdifferential of a function $\varphi$ in
$\Gamma_0(\mathbb{R}^d)$ is that for $x \in \mathrm{dom}(\varphi)$ and $y
\in \mathrm{dom}(\varphi^*)$
\begin{equation}\label{eq:dual-sub}
y \in \partial \varphi(x) \quad \mbox{if and only if} \quad x \in \partial
\varphi^*(y).
\end{equation}
The notion of  the indicator function is also required. For a set $S\subseteq\mathbb{R}^d$, the indicator function on $S$, at point $x$, is defined as
 $$
 \iota_S(x):=\begin{cases}
 0,&\mathrm{~if~} x\in S,\\
 +\infty,&\mathrm{~else}.
 \end{cases}
 $$
Moreover, we denote the smallest cone in $\mathbb{R}^d$ containing $S$ by $\mathrm{cone}(S)$. Then the relative interior of $S$ (see Definition 6.9 of \cite{Bauschke-Combettes:11}) is defined as
$$
\mathrm{ri}(S):=\{x\in S : \mathrm{cone}(S-x) = \mathrm{span}(S-x)\}.
$$

For simplicity, let $n:=\sum_{i=1}^sn_i$ and $A:=[A_1~ A_2~ \dots ~A_s]$. Then,  problem \eqref{model} can be rewritten as
\begin{equation}\label{model2}
\min\left\{\sum_{i=1}^s f_i(x_i)+\iota_{C}(Ax): x_i\in\mathbb{R}^{n_i}, i\in\mathbb{N}_s\right\},
\end{equation}
where
\begin{equation}\label{def:c}
C:=\{b \}.
\end{equation}

 Now, we are ready to characterize  the solutions of model~\eqref{model}
with the help of \eqref{eq:sub-prox-0} and \eqref{eq:dual-sub}.

\begin{theorem}\label{PropSolu}
Let $f_i \in \Gamma_0(\mathbb{R}^{n_i})$, $A_i$ an $m \times n_i$ matrix for $i\in\mathbb{N}_s$ and  $b\in A (\mathrm{ri}(\mathrm{dom}(\sum_{i=1}^s f_i)))$.  If
$x:=(x_1,x_2,\dots,x_s)\in\mathbb{R}^{n_1}\times\mathbb{R}^{n_2}\times\dots\times\mathbb{R}^{n_s}$ is a solution of problem~\eqref{model}, then
for any $\beta>0$ and $\alpha_i>0$, $i\in\mathbb{N}_s$, there
exists a vector $y\in\mathbb{R}^{m}$ such that
\begin{eqnarray} \label{FixEq1}
x_i&=&\mathrm{prox}_{\frac{\alpha_i}{\beta}f_i}(x_i-\frac{\alpha_i}{\beta}A_i^\top y), ~~ i\in\mathbb{N}_s, \\  \label{FixEq2}
y&=&\mathrm{prox}_{\beta\iota_{C}^*}(y+\beta\sum_{i=1}^s A_ix_i).
\end{eqnarray}
Conversely, if there exist  $\beta>0$, $\alpha_i>0$ for $i\in\mathbb{N}_s$, $x:=(x_1,x_2,\dots,x_s)\in\mathbb{R}^{n_1}\times\mathbb{R}^{n_2}\times\dots\times\mathbb{R}^{n_s}$  and $y\in\mathbb{R}^{m}$
satisfying equations \eqref{FixEq1} and \eqref{FixEq2}, then $x$ is
a solution of problem~\eqref{model}.
\end{theorem}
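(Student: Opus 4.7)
My plan is to treat the two directions symmetrically, passing through the KKT system in the middle. The forward direction is where the constraint qualification does its work; the converse is just algebraic translation plus a subgradient inequality.

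For the forward direction, I would start by rewriting problem \eqref{model} as the unconstrained problem \eqref{model2}, so that Fermat's rule says $x$ is a minimizer iff $0\in\partial\!\left(\sum_i f_i + \iota_C\circ A\right)(x)$. The hypothesis $b\in A(\mathrm{ri}(\mathrm{dom}(\sum_i f_i)))$ is precisely the standard constraint qualification guaranteeing that the subdifferential sum rule and chain rule hold, so this becomes: there exists $y\in\mathbb{R}^m$ with $-A_i^\top y\in\partial f_i(x_i)$ for every $i\in\mathbb{N}_s$, together with the primal feasibility $\sum_i A_i x_i = b$. Then I would translate each piece into a proximity fixed-point equation. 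For block $i$, multiplying $-A_i^\top y\in\partial f_i(x_i)$ by $\alpha_i/\beta>0$ yields $-\tfrac{\alpha_i}{\beta}A_i^\top y\in\partial\!\left(\tfrac{\alpha_i}{\beta}f_i\right)(x_i)$, and applying \eqref{eq:sub-prox-0} with $H=I$ produces exactly \eqref{FixEq1}. For the dual equation, I would compute $\iota_C^*(y)=\langle b,y\rangle$ (since $C=\{b\}$), whence $\partial\iota_C^*(y)=\{b\}$ for every $y$; thus $\beta\sum_i A_i x_i=\beta b\in\partial(\beta\iota_C^*)(y)$, and \eqref{eq:sub-prox-0} again gives \eqref{FixEq2}.

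For the converse, I would simply reverse these two translations. Applying \eqref{eq:sub-prox-0} to \eqref{FixEq2} yields $\beta\sum_i A_i x_i\in\partial(\beta\iota_C^*)(y)=\{\beta b\}$, which collapses to $\sum_i A_i x_i = b$, so feasibility is automatic (no CQ needed here). Applying \eqref{eq:sub-prox-0} to each \eqref{FixEq1} and rescaling gives $-A_i^\top y\in\partial f_i(x_i)$ for each $i$. Then for any feasible $x'=(x'_1,\dots,x'_s)$ I would sum the subgradient inequalities,
\begin{equation*}
\sum_{i=1}^s f_i(x'_i)\ge \sum_{i=1}^s f_i(x_i)+\sum_{i=1}^s \langle -A_i^\top y,\,x'_i-x_i\rangle = \sum_{i=1}^s f_i(x_i)-\langle y,\,A x'-Ax\rangle=\sum_{i=1}^s f_i(x_i),
\end{equation*}
the last equality using $Ax=Ax'=b$. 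This shows $x$ is a minimizer.

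The main obstacle is not really a mathematical difficulty but rather the bookkeeping around two separate points: invoking the sum/chain rule under the stated CQ in the forward direction, and noticing in both directions that $\iota_C^*$ being the linear functional $\langle b,\cdot\rangle$ forces $\partial(\beta\iota_C^*)(y)\equiv\{\beta b\}$, so that equation \eqref{FixEq2} is nothing more than a repackaging of the primal feasibility $Ax=b$. Once that observation is in place, the scaling by $\alpha_i/\beta$ in \eqref{FixEq1} is free (since $\partial(\lambda\varphi)=\lambda\partial\varphi$ for $\lambda>0$), and everything reduces to two applications of \eqref{eq:sub-prox-0}.
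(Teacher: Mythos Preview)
Your proof is correct and follows the same skeleton as the paper's---Fermat's rule plus the prox/subdifferential correspondence \eqref{eq:sub-prox-0}---but handles the dual block slightly differently. The paper treats $\iota_C$ abstractly: from $y\in\partial\iota_C(Ax)$ it invokes the Fenchel--Young relation \eqref{eq:dual-sub} to flip to $Ax\in\partial\iota_C^*(y)$, then applies \eqref{eq:sub-prox-0}; in the converse it reassembles \eqref{zero} and cites Fermat's rule again. You instead compute $\iota_C^*(y)=\langle b,y\rangle$ explicitly, so that \eqref{FixEq2} is recognized as literally equivalent to primal feasibility $Ax=b$, and in the converse you verify optimality directly by summing subgradient inequalities over feasible competitors. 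Your route is marginally more elementary (it never needs \eqref{eq:dual-sub}) and makes the meaning of \eqref{FixEq2} transparent; the paper's route is a bit more general in spirit, since it would go through unchanged if $C$ were any closed convex set rather than the singleton $\{b\}$.
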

\begin{proof}\ \
We prove this theorem by applying Fermat's rule that a vector
$x:=(x_1,x_2,\dots,x_s)\in\mathbb{R}^{n_1}\times\mathbb{R}^{n_2}\times\dots\times\mathbb{R}^{n_s}$   is a solution of model~\eqref{model} if and
only if the zero vector is in the subdifferential of the objective
function of model~\eqref{model} evaluated at $x$.

Let $x:=(x_1,x_2,\dots,x_s)\in\mathbb{R}^{n_1}\times\mathbb{R}^{n_2}\times\dots\times\mathbb{R}^{n_s}$    be a solution of model~\eqref{model}. From Theorem 16.37 of \cite{Bauschke-Combettes:11}, the chain rule of the subdifferential holds due to $b\in A (\mathrm {ri}(\mathrm{dom}(\sum_{i=1}^m f_i)))$. Then by
 Fermat's rule we obtain
\begin{equation}\label{zero}
0\in\partial f_i(x_i)+A_i^{\top}\partial \iota_{C}(Ax)
\end{equation}
for $i\in\mathbb{N}_s$.
Thus, there exists $y\in\mathbb{R}^{m}$ such that  $y \in \partial \iota_{C}(Ax)$ and
$-A_i^{\top} y \in \partial f_i(x_i)$ for $i\in\mathbb{N}_s$. The last inclusion implies that for any $\alpha_i>0$, $\beta>0$, $-\frac{\alpha_i}{\beta}A_i^{\top} y \in
\partial (\frac{\alpha_i}{\beta} f_i)(x_i)$. Therefore,
equation \eqref{FixEq1} follows from \eqref{eq:sub-prox-0}. By
\eqref{eq:dual-sub}, from $y \in \partial \iota_{C}(Ax)$, we have that
$Ax \in \partial \iota_{C}^*(y)$.  Hence,  for any $\beta>0$, we obtain that $\beta Ax \in \partial (\beta\iota_{C}^*)(y)$,
which  by \eqref{eq:sub-prox-0} is equivalent to equation
\eqref{FixEq2}.

Conversely, suppose that there exist  $\alpha_i>0$, $\beta>0$, $y\in\mathbb{R}^{m}$ and $x_i\in\mathbb{R}^{n_i}$ for $i\in\mathbb{N}_s$
satisfying the system of fixed-point equations \eqref{FixEq1} and \eqref{FixEq2}. The relation
\eqref{eq:sub-prox-0} ensures that $y \in \partial \iota_{C}(A x)$ and
$- A_i^{\top} y \in \partial f_i(x_i)$.   Clearly, these inclusions together ensure that
the relation \eqref{zero} holds. That is, the zero vector is in the subdifferential
of the objective function at $(x_1, \dots,x_s)$. Again, by  Fermat's rule, $(x_1,\dots,x_s)$ is
a solution of model~\eqref{model}.
\end{proof}

Theorem \ref{PropSolu} characterizes a solution of
problem~\eqref{model} in terms of the system of fixed-point equations
\eqref{FixEq1} and \eqref{FixEq2}. Through out this paper, for problem~\eqref{model}, we assume that $b\in A( \mathrm{ri}(\mathrm{dom}(\sum_{i=1}^s f_i)))$ and it has at least one solution. With these assumptions and by Theorem \ref{PropSolu}, we know that fixed-point equations  \eqref{FixEq1} and \eqref{FixEq2} have at least one solution for any $\alpha_i>0$, $i\in\mathbb{N}_s$ and $\beta>0$. This makes it possible for us to compute a solution of model~\eqref{model} by developing fixed-point iterative schemes.

%

\section{A Two-step Iterative Scheme}\label{sec:algorithm}

We develop in this section a two-step iterative scheme for solving
 optimization problem~\eqref{model} by using the system of
fixed-point equations \eqref{FixEq1} and \eqref{FixEq2}.

We begin with rewriting  equations \eqref{FixEq1} and \eqref{FixEq2}
in a compact form. To this end, we first introduce an operator by
integrating together the $s+1$ proximity operators involved in
equations \eqref{FixEq1} and \eqref{FixEq2}. Specifically, for given
$f_i \in \Gamma_0(\mathbb{R}^{n_i})$, $\iota_{C} \in
\Gamma_0(\mathbb{R}^m)$, $\alpha_i >0$, $\beta>0$, $i\in\mathbb{N}_s$, we define the operator ${\cal T}:=T^{(f_1,\dots, f_s, \frac{\alpha_1}{\beta},\dots, \frac{\alpha_s}{\beta})}_{(\iota_{C},\beta)}: \mathbb{R}^{n_1}\times\dots\times\mathbb{R}^{n_s} \times \mathbb{R}^m \rightarrow
\mathbb{R}^{n_1}\times\dots\times\mathbb{R}^{n_s} \times \mathbb{R}^m$ at a vector $v:=(x_1,\dots,x_s,y) \in
\mathbb{R}^{n_1}\times\dots\times\mathbb{R}^{n_s} \times \mathbb{R}^m$ as follows:
\begin{equation}\label{eq:def-T}
{\cal T}(v):=(\mathrm{prox}_{\frac{\alpha_1}{\beta}f_1} (x_1),\dots,\mathrm{prox}_{\frac{\alpha_s}{\beta}f_s} (x_s),
\mathrm{prox}_{\beta\iota_{C}^*}(y)).
\end{equation}
Operator ${\cal T}$ couples all the proximity operators
$\mathrm{prox}_{\frac{\alpha_i}{\beta}f_i}$, $i\in\mathbb{N}_s$ and $\mathrm{prox}_{\beta\iota^*_{C}}$. In the following lemma, we show that the operator ${\cal T}$ is the proximity
operator of a new convex function
\begin{equation}\label{def:phi}
 \Phi(v):=\sum_{i=1}^sf_i(x_i)+\iota^*_{C}(y)
 \end{equation} for
$v:=(x_1,\dots,x_s,y)$ with respect to the
matrix
\begin{equation}
\label{def:r}
R:=\mathrm{diag}\left(\frac{\beta}{\alpha_1}{\bf{1}}_{n_1},\dots,\frac{\beta}{\alpha_s}{\bf{1}}_{n_s}, \frac{1}{\beta}{\bf{1}}_m\right),
\end{equation}
where  ${\bf{1}}_d$ (resp. ${\bf{0}}_d$) is a $d$-dimensional vector with $1$ (resp. $0$) as its components for any $d\in\mathbb{N}$.

\begin{lemma}\label{lemma:proc-comb}
If operator ${\cal T}$ is defined by \eqref{eq:def-T}, then
${\cal T}$ is the proximity operator of the function $\Phi$ with
respect to the matrix $R$, that is, ${\cal T}=\mathrm{prox}_{\Phi,
R}$.
\end{lemma}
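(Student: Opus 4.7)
The plan is to show this directly from the definition of the weighted proximity operator in \eqref{def:prox}, exploiting the block-diagonal structure of $R$ and the separability of $\Phi$. The single minimization defining $\mathrm{prox}_{\Phi,R}(v)$ will decouple into $s+1$ independent minimizations, each of which I can identify with one of the proximity operators appearing in \eqref{eq:def-T}.

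First, I would fix $v = (x_1,\dots,x_s,y)$ and write an arbitrary point in the product space as $u = (u_1,\dots,u_s,w)$ with $u_i \in \mathbb{R}^{n_i}$ and $w \in \mathbb{R}^m$. Because $R$ in \eqref{def:r} is block diagonal with blocks $\tfrac{\beta}{\alpha_i} I_{n_i}$ and $\tfrac{1}{\beta} I_m$, the $R$-weighted squared distance splits as
\begin{equation*}
\tfrac{1}{2}\|u-v\|_R^2 = \sum_{i=1}^s \tfrac{\beta}{2\alpha_i}\|u_i - x_i\|_2^2 + \tfrac{1}{2\beta}\|w - y\|_2^2.
\end{equation*}
Combining this with $\Phi(u) = \sum_{i=1}^s f_i(u_i) + \iota_C^*(w)$ from \eqref{def:phi}, the objective $\tfrac12 \|u-v\|_R^2 + \Phi(u)$ splits into a sum over the $s+1$ block variables, with no coupling among $u_1,\dots,u_s,w$.

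Consequently, the joint minimization defining $\mathrm{prox}_{\Phi,R}(v)$ reduces to minimizing each block separately. For block $i$, minimizing $\tfrac{\beta}{2\alpha_i}\|u_i-x_i\|_2^2 + f_i(u_i)$ is equivalent, after multiplying by $\tfrac{\alpha_i}{\beta}$, to minimizing $\tfrac12 \|u_i-x_i\|_2^2 + \tfrac{\alpha_i}{\beta} f_i(u_i)$, whose unique minimizer is $\mathrm{prox}_{\frac{\alpha_i}{\beta} f_i}(x_i)$. Similarly, minimizing $\tfrac{1}{2\beta}\|w-y\|_2^2 + \iota_C^*(w)$ yields $w = \mathrm{prox}_{\beta \iota_C^*}(y)$. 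Collecting these components gives precisely $\mathcal{T}(v)$ as defined in \eqref{eq:def-T}.

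There is no real obstacle here; the only thing to keep straight is the rescaling caused by the weights $\beta/\alpha_i$ and $1/\beta$, which is exactly what produces the factors $\alpha_i/\beta$ and $\beta$ in front of the functions $f_i$ and $\iota_C^*$ inside the unweighted proximity operators. Since the minimizer in each block is unique (each block's objective is strongly convex in the corresponding variable), the conclusion $\mathcal{T} = \mathrm{prox}_{\Phi,R}$ follows.
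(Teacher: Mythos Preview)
Your argument is correct and is exactly the natural one: exploit the block-diagonal form of $R$ and the separability of $\Phi$ to decouple the defining minimization into $s+1$ independent scalar-weighted proximity problems, then identify each with the corresponding component of $\mathcal{T}$. The paper itself omits the proof entirely, referring instead to Lemma~3.1 of \cite{Li-Shen-Xu-Zhang:AiCM:14}, whose content is precisely this separability computation; so your write-up supplies what the paper leaves out, with no substantive difference in approach.
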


Here we omit the proof since one can complete it by referring to Lemma 3.1 of \cite{Li-Shen-Xu-Zhang:AiCM:14}.
By Lemma~\ref{lemma:proc-comb}, we know that the operator ${\cal T}$ is
firmly non-expansive with respect to the matrix $R$.
Let \begin{equation}
\label{def:P}
P:=\mathrm{diag}\left(\frac{\beta}{\alpha_1}{\bf{1}}_{n_1},\dots,\frac{\beta}{\alpha_s}{\bf{1}}_{n_s}\right).
\end{equation}With the help of the above notation, equations~\eqref{FixEq1} and
\eqref{FixEq2} can be reformulated in a compact form
\begin{equation}\label{Fix}
v=({\cal T}\circ E)(v),
 \end{equation}
 where
 \begin{equation}\label{def:e}
 E:=\begin{bmatrix}
 I&-P^{-1}A^\top\\
 \beta A&I
 \end{bmatrix}.
 \end{equation}
Theorem~\ref{PropSolu} together with equation \eqref{Fix} indicates that finding a solution of problem ~\eqref{model} essentially amounts to computing a fixed-point of the operator ${\cal T} \circ E$. As discussed at the end of Section \ref{sec:characterization}, the operator ${\cal T} \circ E$ has at least one fixed-point. We next focus on developing efficient iterative schemes for finding a fixed-point of the operator. As shown in \cite{Li-Shen-Xu-Zhang:AiCM:14}, the matrix $E$ is not nonexpansive due to the fact that $\|E\|_2>1$. Therefore, a simple fixed-point iteration $v^{k+1}=({\cal T}\circ E)(v^k)$ for a given initial guess $v^0$, may not yield a convergent sequence $\{v^k: k \in
\mathbb{N}\}$, where $\mathbb{N}$ is the set of all natural numbers.

Our idea is to split the expansive matrix $E$ into several terms, as in \cite{Li-Shen-Xu-Zhang:AiCM:14} and in \cite{Li-Zhang:ACHA:2015}. Here, we split $E$ as
\begin{equation}\label{eq:splitE}
E=(E-R^{-1}M_0)+R^{-1}M_1+R^{-1}M_2,
\end{equation}
where $M_i\in\mathbb{R}^{(n+m)\times(n+m)}$ for $i=0,1,2$ and $M_0=M_1+M_2$.
 Accordingly,  equation \eqref{Fix} is equivalent to $$v={\cal{T}}((E-R^{-1}M_0)v+R^{-1}M_1v+R^{-1}M_2v).$$ Thus, we propose the following two-step iterative scheme:
\begin{equation}\label{eq:general-ite}
v^{k+1}={\cal T}\left((E-R^{-1}M_0)v^{k+1}+R^{-1}M_1 v^k+R^{-1}M_2v^{k-1}\right).
\end{equation}
We point out here that although iterative scheme \eqref{eq:general-ite} is an implicit scheme for the whole vector $v$, it becomes  explicit by choosing $M_0$ satisfying that  $E-R^{-1}M_0$ is a strictly upper triangular or lower triangular matrix. Further, we assume that there exists a unique $v^{k+1}$ satisfying \eqref{eq:general-ite} for any $v^{k}, v^{k-1}\in\mathbb{R}^{n+m}$ in the rest of this paper.  We shall choose matrices  $M_0, M_1, M_2$ in the next section so that  iterative scheme \eqref{eq:general-ite}  converges.

To close this section, we remark that  when
$M_2=0$ (in this case, $M_0=M_1$), the two-step iterative scheme \eqref{eq:general-ite}
reduces to a one-step iterative scheme
\begin{equation}\label{eq:oneiter}
v^{k+1}={\cal T}\left((E-R^{-1}M_0)v^{k+1}+R^{-1}M_0 v^{k}\right).
\end{equation}
Many efficient algorithms can be obtained from
\eqref{eq:oneiter} by specifying the matrix $M_0$. The reader is referred to \cite{Li-Shen-Xu-Zhang:AiCM:14} for details.

\section{Convergence Analysis of the Proposed Iterative Scheme}\label{sec:converge}
In this section, we study the convergence of  iterative scheme \eqref{eq:general-ite}.  By applying the notion of weakly firmly nonexpansive operators and Condition-M, which were first introduced  in \cite{Li-Shen-Xu-Zhang:AiCM:14}, we prove  that  if the matrices $M_0, M_1, M_2$  satisfy Condition-M,  then the sequence $\{v^{k}: k \in \mathbb{N}\}$ generated from iterative scheme  \eqref{eq:general-ite} converges to a solution of equation~\eqref{Fix}. Hence, the sequence $\{x^k: k\in\mathbb{N}\}$ converges to a solution of model~\eqref{model}.

We begin with rewriting iterative scheme \eqref{eq:general-ite} in an explicit way. To this end, we introduce  $\mathcal{M}:=\{M_0, M_1, M_2\}$. We also define
$T_{\mathcal{M}}:\mathbb{R}^{n+m}\times\mathbb{R}^{n+m}\rightarrow \mathbb{R}^{n+m}$, at $(u_1, u_2)\in \mathbb{R}^{n+m}\times\mathbb{R}^{n+m}$, as $w=T_\mathcal{M}(u_1, u_2)$ with $w$ satisfying
\begin{equation}\label{def:tm}
 w=\mathcal{T}((E-R^{-1}M_0)w+R^{-1}M_1u_1+R^{-1}M_2u_2).
 \end{equation}
The operator $T_\mathcal{M}$ is well-defined if the corresponding set $\mathcal{M}$ is carefully chosen. Here, the word ``well-defined'' means that there exists a unique $w\in\mathbb{R}^{n+m}$ satisfying \eqref{eq:general-ite} for any $(u_1, u_2)\in\mathbb{R}^{n+m}\times\mathbb{R}^{n+m}$. With the help of $\mathcal{M}$ and $T_\mathcal{M}$, \eqref{eq:general-ite} can be rewritten as
\begin{equation}\label{eq:gene-ite2}
v^{k+1}=T_\mathcal{M}(v^k, v^{k-1}).
\end{equation}

Now, we recall the notion of weakly firmly nonexpansive operators and Condition-M, which were introduced in \cite{Li-Shen-Xu-Zhang:AiCM:14}.
\begin{definition}[Weakly Firmly Nonexpansive]\label{def:wfn}
We say an operator $T: \mathbb{R}^{2d}\rightarrow \mathbb{R}^d$ is weakly firmly nonexpansive with respect to $\mathcal{M}$, if for any $(u_i, w_i, z_i)\in\mathbb{R}^d\times\mathbb{R}^d\times\mathbb{R}^d$ satisfying $z_i=T(u_i, w_i)$ for $i=1,2$, there holds
$$
\langle z_2-z_1, M_0(z_2-z_1)\rangle\le \langle z_2-z_1, M_1(u_2-u_1)+M_2(w_2-w_1)\rangle.
$$
\end{definition}
Next we  describe the definition of Condition-M.
\begin{definition}[Condition-M]\label{def}
We say a set $\mathcal{M}:=\{M_0, M_1, M_2\}$ of $d\times d$ matrices satisfies Condition-M, if the following three hypotheses are satisfied:
\begin{itemize}
\item [(i)] $M_0=M_1+M_2$,
\item [(ii)] $H:=M_0+M_2$ is in $\mathbb{S}^d_+$,
\item [(iii)] $\|H^{-\frac{1}{2}}M_2H^{-\frac{1}{2}}\|_2<\frac{1}{2}$.
\end{itemize}
\end{definition}

We also need to review a  property of weakly firmly nonexpansive operators established in \cite{Li-Shen-Xu-Zhang:AiCM:14}.
\begin{theorem}\label{thm:weakConve}
Suppose that the operator $T:\mathbb{R}^{2d}\rightarrow \mathbb{R}^d$ is weakly firmly nonexpansive with respect to $\mathcal{M}:=\{M_0, M_1, M_2\}$ with $\mathrm{dom}(T)=\mathbb{R}^{2d}$ and the set of fixed-points of $T$ is nonempty. Let the sequence $\{w^k: k\in\mathbb{N}\}$ be generated by $w^{k+1}=T(w^k, w^{k-1})$ for any given $w^0, w^1\in\mathbb{R}^d$.
If $\mathcal{M}$ satisfies Condition-M, then $\{w^k: k\in\mathbb{N}\}$ converges. In addition, if $T$ is continuous, then $\{w^k: k\in\mathbb{N}\}$ converges to a fixed-point of $T$.
\end{theorem}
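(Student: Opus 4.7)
The plan is to mimic the classical Fej\'er-monotone convergence argument for fixed-point iterations, but adapt it to the two-step memory in the recursion $w^{k+1}=T(w^k,w^{k-1})$ by constructing a Lyapunov functional built from two consecutive iterates, with Condition-M as the quantitative input. First, I would fix any $w^\ast$ with $w^\ast=T(w^\ast,w^\ast)$ and apply the weakly firmly nonexpansive inequality of Definition \ref{def:wfn} with $(u_1,w_1,z_1)=(w^\ast,w^\ast,w^\ast)$ and $(u_2,w_2,z_2)=(w^k,w^{k-1},w^{k+1})$. Setting $e^k:=w^k-w^\ast$, this yields $\langle e^{k+1},M_0 e^{k+1}\rangle\le\langle e^{k+1},M_1 e^k+M_2 e^{k-1}\rangle$, which by $M_0=M_1+M_2$ rearranges to
\begin{equation*}
\langle e^{k+1},M_0(e^{k+1}-e^k)\rangle + \langle e^{k+1},M_2(e^k-e^{k-1})\rangle \le 0.
\end{equation*}

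Next, with $H:=M_0+M_2\in\mathbb{S}^d_+$ from Condition-M (ii), I would introduce the candidate energy
\begin{equation*}
\Psi_k := \|e^k\|_H^2 + \|e^k-e^{k-1}\|_H^2
\end{equation*}
(possibly with a tunable coefficient on the second term), expand $\Psi_k-\Psi_{k+1}$ by the polarization identity in the $H$-inner product, and insert the inequality from the previous step. The cross term $\langle e^{k+1}-e^k,M_2(e^k-e^{k-1})\rangle$ must then be controlled by Cauchy--Schwarz in the $H$-norm; Condition-M (iii), namely $\|H^{-1/2}M_2 H^{-1/2}\|_2<1/2$, should supply exactly the slack needed to absorb this cross term and produce a strict descent of the form $\Psi_{k+1}+c\,\|e^{k+1}-e^k\|_H^2 \le \Psi_k$ for some $c>0$ controlled by the spectral gap $1/2-\|H^{-1/2}M_2 H^{-1/2}\|_2$.

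Once such descent is in hand, the rest is standard bookkeeping. Monotone convergence of $\{\Psi_k\}$ gives boundedness of $\{w^k\}$ in the $H$-norm and summability $\sum_k\|w^{k+1}-w^k\|_H^2<\infty$, and hence $w^{k+1}-w^k\to 0$. Along any subsequence $w^{k_j}\to\bar w$ we also have $w^{k_j-1}\to\bar w$, so continuity of $T$ applied to $w^{k_j+1}=T(w^{k_j},w^{k_j-1})$ yields $\bar w=T(\bar w,\bar w)$, identifying $\bar w$ as a fixed point. Re-running the descent argument with $w^\ast$ replaced by $\bar w$ gives another monotone quantity $\Psi_k^{\bar w}$ which tends to $0$ along $\{k_j\}$ and therefore along the whole sequence, forcing $w^k\to\bar w$.

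The technical heart of the argument is the second step: pinning down the exact Lyapunov functional and verifying that the strict spectral inequality in Condition-M (iii) converts into a strictly positive descent constant. A minor nuisance is that Condition-M does not require $M_0$ or $M_2$ themselves to be symmetric, so whenever a quadratic form $\langle v,Mv\rangle$ is manipulated one must silently symmetrize to $\langle v,\tfrac12(M+M^\top)v\rangle$ and check that the combination $H$, which is symmetric by (ii), is indeed the object governing the estimate. The final Opial-type identification of cluster points and upgrade from subsequential to full convergence is then routine once descent has been proved.
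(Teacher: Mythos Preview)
The paper does not actually prove Theorem~\ref{thm:weakConve}; it is quoted from \cite{Li-Shen-Xu-Zhang:AiCM:14}. Your outline is essentially the argument used there, and the computations visible elsewhere in the present paper confirm this: the telescoping in the proof of Lemma~\ref{lema: PPriamalDualIne} and the summability estimate~\eqref{eq:eq8} (attributed to Lemma~4.4 of the cited reference) are built from exactly the pieces you describe---rewrite the weakly firmly nonexpansive inequality via $M_0=H-M_2$, polarize in the $H$-inner product to produce the telescoping term $\|e^{k+1}\|_H^2-\|e^k\|_H^2$, and absorb the residual $M_2$-cross terms by Cauchy--Schwarz with Condition-M~(iii) providing the strict spectral gap.

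Two points deserve attention. First, the monotone quantity is not simply $\|e^k\|_H^2+\|r^k\|_H^2$: the telescoping leaves an extra cross term $\langle e^k,M_2 r^k\rangle$ (compare \eqref{eq:eq11}--\eqref{eq:eq13}), so the Lyapunov functional that actually decreases is closer to $\tfrac12\|e^k\|_H^2-\langle e^k,M_2 r^k\rangle+c\|r^k\|_H^2$ for an appropriate $c$ tied to $\|\widetilde M\|_2$. Your parenthetical about tunable coefficients suggests you anticipate adjustment, but the cross term itself, not just a coefficient, must enter. Second, the theorem as stated asserts convergence of $\{w^k\}$ already \emph{without} continuity of $T$, whereas your Opial-type argument uses continuity to identify cluster points as fixed points before upgrading subsequential to full convergence. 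Your proposal therefore establishes only the second sentence of the theorem; the first sentence (bare convergence under Condition-M alone) is not covered by what you wrote and would require either a direct Cauchy estimate from the WFN inequality applied to pairs of iterates or a separate device from the original source.
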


By the above theorem, in order to ensure   convergence of iterative scheme \eqref{eq:gene-ite2}, it  suffices to  prove $T_\mathcal{M}$ defined by \eqref{def:tm} is weakly firmly nonexpansive and continuous. We show it in the next proposition. Before doing this, we define a skew-symmetric matrix $S_{A}$ for an $m\times n$ matrix $A$ as
\begin{equation}\label{def:sa}
S_{A}:=\begin{bmatrix}
0&-A^\top \\
A&0
\end{bmatrix}.
\end{equation}
 Then, $E=I+R^{-1}S_A$.

\begin{proposition}\label{prop:tmWFN}
Let $f_i \in \Gamma_0(\mathbb{R}^{n_i})$, $\alpha_i>0$ for $i\in\mathbb{N}_s$ and $\beta>0$.  Let $\mathcal{M}:=\{M_0, M_1, M_2\}$ be a set of $(n+m) \times (n+m)$ matrices and  $T_\mathcal{M}$  be defined by \eqref{def:tm}. If $T_\mathcal{M}$ is well-defined, then
 \begin{itemize}
 \item [(i)] $T_\mathcal{M}$ is weakly firmly nonexpansive with respect to $\mathcal{M}$,
 \item [(ii)] $T_\mathcal{M}$ is continuous.
 \end{itemize}
\end{proposition}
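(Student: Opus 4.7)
The plan is to leverage Lemma~\ref{lemma:proc-comb} (firm nonexpansiveness of $\mathcal{T}=\mathrm{prox}_{\Phi,R}$ with respect to $R$) together with the skew-symmetry of $S_{A}$ to deduce (i), and then to bootstrap (i) plus the continuity of $\mathcal{T}$ into (ii) by a standard subsequence-and-uniqueness argument.

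For (i), I would fix two inputs $(u_i^{(1)},u_i^{(2)})\in\mathbb{R}^{n+m}\times\mathbb{R}^{n+m}$ and their images $w_i:=T_\mathcal{M}(u_i^{(1)},u_i^{(2)})$, $i=1,2$, and introduce the auxiliary vectors $a_i:=(E-R^{-1}M_0)w_i+R^{-1}M_1 u_i^{(1)}+R^{-1}M_2 u_i^{(2)}$, so that $w_i=\mathcal{T}(a_i)$. Firm nonexpansiveness of $\mathcal{T}$ with respect to $R$ gives $\|w_2-w_1\|_R^{2}\le\langle w_2-w_1,\,R(a_2-a_1)\rangle$. Since $E=I+R^{-1}S_A$ yields $RE=R+S_A$, a direct computation produces $R(a_2-a_1)=(R+S_A-M_0)(w_2-w_1)+M_1(u_2^{(1)}-u_1^{(1)})+M_2(u_2^{(2)}-u_1^{(2)})$. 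Substituting back, the term $\|w_2-w_1\|_R^{2}$ cancels on both sides and the identity $\langle z,S_A z\rangle=0$ kills the $S_A$-contribution, leaving exactly the inequality demanded by Definition~\ref{def:wfn}.

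For (ii), given $(u_1^k,u_2^k)\to(u_1,u_2)$ I would set $w^k:=T_\mathcal{M}(u_1^k,u_2^k)$ and $w:=T_\mathcal{M}(u_1,u_2)$ and aim for $w^k\to w$. Because $\mathcal{T}$ is firmly nonexpansive (hence continuous), passing to the limit in the defining equation along any convergent subsequence $w^{k_j}\to w^{\ast}$ shows that $w^{\ast}$ satisfies the fixed-point equation corresponding to $(u_1,u_2)$; by the well-definedness hypothesis on $T_\mathcal{M}$ this forces $w^{\ast}=w$. Consequently, once $\{w^k\}$ is known to be bounded, every subsequence contains a subsubsequence tending to $w$, and the desired continuity follows.

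The delicate step is boundedness of $\{w^k\}$. Applying (i) to $(u_1^k,u_2^k)$ against $(u_1,u_2)$ and invoking Cauchy--Schwarz gives $\langle w^k-w,M_0(w^k-w)\rangle\le\|w^k-w\|\cdot\|M_1(u_1^k-u_1)+M_2(u_2^k-u_2)\|$, which immediately pins down $\|w^k-w\|$ when the symmetric part of $M_0$ is positive definite. In general I would rule out escape of $w^k$ to infinity by a homogeneity/normalization argument on $w^k/\|w^k\|$ that, combined with continuity of $\mathcal{T}$ and uniqueness of the fixed point of the defining implicit equation, leads to a contradiction; this is the step where the well-definedness hypothesis genuinely earns its keep, and it is the main obstacle I expect in writing out the argument fully.
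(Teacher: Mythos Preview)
Your argument for (i) is correct and coincides with the paper's: introduce the pre-images $a_i$, apply firm nonexpansiveness of $\mathcal{T}$ in the $R$-inner product, use $RE=R+S_A$, and let the skew-symmetric term vanish.

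For (ii) you and the paper diverge. The paper does not attempt to show that $w^k:=T_\mathcal{M}(u_1^k,u_2^k)$ stays bounded when only the inputs converge. Instead it proves a \emph{closed-graph} statement: it takes a sequence $(u^k,w^k,z^k)$ with $z^k=T_\mathcal{M}(u^k,w^k)$ that is \emph{assumed} to converge to some $(u,w,z)$, passes to the limit in the implicit equation $z^k=\mathcal{T}\big((E-R^{-1}M_0)z^k+R^{-1}M_1u^k+R^{-1}M_2w^k\big)$ using the continuity of $\mathcal{T}$, and concludes $z=T_\mathcal{M}(u,w)$ by well-definedness. This bypasses the boundedness issue entirely, but it establishes closedness of the graph rather than sequential continuity in the usual sense. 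That weaker property is in fact all that is used downstream (in Theorem~\ref{thm:itr_con} one already knows $\{v^k\}$ converges and only needs to identify the limit as a fixed point), so the paper's shortcut is adequate for its purposes.

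Your approach is the more honest attempt at genuine continuity, and you have correctly isolated the obstacle: without positive definiteness of the symmetric part of $M_0$, the weak-firm-nonexpansiveness inequality alone does not control $\|w^k-w\|$. The homogeneity/normalization sketch you propose is not convincing as stated---rescaling $w^k/\|w^k\|$ does not obviously interact well with the \emph{nonlinear} operator $\mathcal{T}$ in the implicit equation, so it is unclear what contradiction is reached in the limit. If you want to close this gap rather than follow the paper's closed-graph route, you would need an additional structural assumption (e.g.\ strict block-triangularity of $E-R^{-1}M_0$, or strong convexity in the implicit subproblems) of the kind the paper imposes later when verifying that $T_\mathcal{M}$ is well-defined for the specific algorithms.
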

\begin{proof}
We first prove Item (i). It follows from the definition of $T_\mathcal{M}$ that for any  $(u_i, w_i, z_i)\in\mathbb{R}^{n+m}\times\mathbb{R}^{n+m}\times\mathbb{R}^{n+m}$ satisfying $z_i=T_\mathcal{M}(u_i, w_i)$, for $i=1,2$, there holds
$$
z_i=\mathcal{T}((E-R^{-1}M_0)z_i+R^{-1} M_1 u_i+R^{-1}M_2w_i).
$$
According to Lemma \ref{lemma:proc-comb}, $\mathcal{T}$ is firmly nonexpansive with respect to $R$. Thus, we observe that
$$
\|z_2-z_1\|_R^2\le \langle z_2-z_1, (RE-M_0)(z_2-z_1)+M_1(u_2-u_1)+M_2(w_2-w_1)\rangle.
$$
Since $RE=R+S_A$ and $S_A$ is skew-symmetric, we have
$$
\langle z_2-z_1, M_0(z_2-z_1)\rangle\le\langle z_2-z_1, M_1(u_2-u_1)+M_2(w_2-w_1)\rangle.
$$
From Definition \ref{def:wfn}, we get Item (i).

We next prove Item (ii). From the definition of $T_\mathcal{M}$, for any  sequence  $\{(u^k,w^k,z^k)\in\mathbb{R}^{n+m}\times \mathbb{R}^{n+m}\times\mathbb{R}^{n+m}: k\in\mathbb{N}\}$  satisfying $z^k=T_\mathcal{M}(u^k, w^k)$ and converging to $(u,w,z)$,  we have that
$
z^k=\mathcal{T}((E-M_0)z^k+R^{-1}M_1 u^k+R^{-1}M_2 w^k).
$
This with the continuity of $\mathcal{T}$ implies that $z=\mathcal{T}((E-M_0)z+R^{-1}M_1 u+R^{-1}M_2 w).$ Thus, $z=T_\mathcal{M}(u,w)$, proving Item (ii).
\end{proof}

We are now ready to prove convergence of the sequence generated from  iterative scheme \eqref{eq:general-ite}.

\begin{theorem}\label{thm:itr_con}
Let $f_i \in \Gamma_0(\mathbb{R}^{n_i})$, $\alpha_i>0$ for $i\in\mathbb{N}_s$ and $\beta>0$. Let $\mathcal{T}$ and $E$ be defined as \eqref{eq:def-T}  and \eqref{def:e} respectively,  $\mathcal{M}:=\{M_0, M_1, M_2\}$ be a set of $(n+m) \times (n+m)$ matrices and  $T_\mathcal{M}$  be defined by \eqref{def:tm}.  Let $\{v^k: k\in\mathbb{N}\}$ be generated by \eqref{eq:general-ite}  for given points $v^0, v_1$. Suppose that $T_\mathcal{M}$ is well-defined.
If $\mathcal{M}$ satisfies Condition-M, then the sequence $\{v^k: k\in\mathbb{N}\}$ converges to a fixed-point of $\mathcal{T}\circ E$, and $\{x^k: k\in\mathbb{N}\}$ converges to a solution of problem \eqref{model}.
\end{theorem}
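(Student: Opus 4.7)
The plan is to reduce this theorem directly to Theorem~\ref{thm:weakConve} applied to the operator $T_\mathcal{M}$. Observe that the iteration \eqref{eq:general-ite} is, by construction, nothing but $v^{k+1}=T_\mathcal{M}(v^k, v^{k-1})$, so the hypotheses we must verify are exactly those of Theorem~\ref{thm:weakConve}: (a) $T_\mathcal{M}$ is weakly firmly nonexpansive with respect to $\mathcal{M}$, (b) $\mathrm{dom}(T_\mathcal{M})=\mathbb{R}^{2(n+m)}$, (c) $T_\mathcal{M}$ has at least one fixed-point, (d) $T_\mathcal{M}$ is continuous, and (e) $\mathcal{M}$ satisfies Condition-M. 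Items (a) and (d) are Items~(i) and~(ii) of Proposition~\ref{prop:tmWFN}; Item (b) is precisely the well-definedness hypothesis (the standing assumption just after \eqref{eq:general-ite} together with the well-definedness hypothesis of the theorem); Item (e) is given.

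The key step is therefore (c), and it proceeds by identifying the fixed-points of $T_\mathcal{M}$ (in the sense $w=T_\mathcal{M}(w,w)$) with those of $\mathcal{T}\circ E$. If $w=T_\mathcal{M}(w,w)$, then by \eqref{def:tm}
\begin{equation*}
w=\mathcal{T}\bigl((E-R^{-1}M_0)w+R^{-1}M_1 w+R^{-1}M_2 w\bigr)=\mathcal{T}\bigl(Ew+R^{-1}(-M_0+M_1+M_2)w\bigr),
\end{equation*}
and the condition $M_0=M_1+M_2$ from Condition-M collapses the bracket to $Ew$, giving $w=(\mathcal{T}\circ E)(w)$. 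The converse is immediate. Since the standing assumptions after Theorem~\ref{PropSolu} guarantee that the system \eqref{FixEq1}--\eqref{FixEq2}, which is exactly the fixed-point equation $v=(\mathcal{T}\circ E)(v)$ in compact form, admits at least one solution, the fixed-point set of $T_\mathcal{M}$ is nonempty.

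With (a)--(e) verified, Theorem~\ref{thm:weakConve} yields that $\{v^k:k\in\mathbb{N}\}$ converges to some fixed-point $v^*=(x_1^*,\ldots,x_s^*,y^*)$ of $T_\mathcal{M}$, which by the argument above is a fixed-point of $\mathcal{T}\circ E$. Applying the converse direction of Theorem~\ref{PropSolu} to $v^*$, the block $x^*:=(x_1^*,\ldots,x_s^*)$ is a solution of \eqref{model}, and since $\{x^k\}$ is the projection of $\{v^k\}$ onto its first $n$ coordinates, $\{x^k\}$ converges to $x^*$. The only real obstacle in this program is the fixed-point correspondence in step (c); everything else is bookkeeping, invoking results already established in the paper.
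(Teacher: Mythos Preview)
Your proposal is correct and follows essentially the same route as the paper's proof: invoke Proposition~\ref{prop:tmWFN} for weak firm nonexpansiveness and continuity, identify the fixed-points of $T_\mathcal{M}$ with those of $\mathcal{T}\circ E$, apply Theorem~\ref{thm:weakConve}, and finish with Theorem~\ref{PropSolu}. Your write-up simply spells out in more detail the fixed-point correspondence (via $M_0=M_1+M_2$) and the nonemptiness of the fixed-point set, both of which the paper handles in a single sentence.
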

\begin{proof}\ \
By the definition of $T_\mathcal{M}$, operators $T_\mathcal{M}$ and $\mathcal{T}\circ E$ share the same set of fixed-points. By Proposition \ref{prop:tmWFN}, the operator $T_\mathcal{M}$ is weakly firmly non-expansive with respect to $\mathcal{M}$ and continuous. Therefore, Theorem \ref{thm:weakConve} ensures that the sequence $\{v^k: k\in\mathbb{N}\}$  converges to a fixed-point of $T_\mathcal{M}$.  By Proposition \ref{PropSolu}, the sequence $\{x^k: k\in\mathbb{N}\}$ converges to a solution of problem \eqref{model}.
\end{proof}

Theorem~\ref{thm:itr_con} shows that convergence of  iterative scheme \eqref{eq:general-ite} relies completely on whether the matrices set $\mathcal{M}$ used in scheme \eqref{eq:general-ite} satisfies Condition-M. We will develop  in Section \ref{sec:specificAlg} specific convergent algorithms by generating  sets of $\{M_0, M_1, M_2\}$ satisfying Condition-M.

\section{Convergence Rate of the Proposed Two-step Iterative Scheme}
\label{sec:ConvRate}
In this section, we study the convergence rate of the proposed fixed-point iterative scheme \eqref{eq:general-ite}. We show that the proposed algorithm has $O(\frac{1}{k})$ convergence rate in the ergodic sense and the sense of the partial primal-dual gap.

\subsection{Ergodic $O(\frac{1}{k})$ Rate}
We first study the convergence rate of the proposed algorithm \eqref{eq:general-ite} in the ergodic sense. We prove in this subsection that the proposed iterative scheme \eqref{eq:general-ite} has $O(\frac{1}{k})$ convergence in the ergodic sense. To this end, we first review a lemma presented in \cite{Shi-Ling-Wu-Yin:siamjopt:2015}.
\begin{lemma}\label{lema:running}
If a sequence $\{a^k: k\in\mathbb{N}\}$ satisfies: $a^k\ge 0$ and $\sum_{i=1}^{+\infty}a^i< +\infty$, then
\begin{itemize}
\item [(i)] $\frac{1}{k}\sum_{i=1}^k {a^i}=O(\frac{1}{k})$,
\item [(ii)] $\min_{i\le k}\{a^i\}=o(\frac{1}{k})$.
\end{itemize}
\end{lemma}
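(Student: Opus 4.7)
The strategy is to handle the two parts separately, since they are essentially independent, and to introduce the running minimum $b^k := \min_{i\le k} a^i$ to access the monotonicity that part (ii) needs. Part (i) follows directly from boundedness of the partial sums, while part (ii) reduces to the classical fact that a non-increasing nonnegative summable sequence must satisfy $k b^k \to 0$.

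For part (i), set $S := \sum_{i=1}^{+\infty} a^i$, which is finite by assumption. Since $a^i\ge 0$, every partial sum obeys $\sum_{i=1}^k a^i \le S$, and dividing through by $k$ yields $\frac{1}{k}\sum_{i=1}^k a^i \le \frac{S}{k}$, which is $O(\frac{1}{k})$ by definition.

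For part (ii), I would first observe that $b^k = \min(b^{k-1}, a^k)$, so $b^k$ is non-increasing in $k$ and $b^k \le a^k$ for every $k$. Consequently $\sum_{k=1}^{+\infty} b^k \le \sum_{k=1}^{+\infty} a^k < +\infty$. To promote this convergence to $k b^k \to 0$, I would fix an arbitrary $\epsilon > 0$ and choose $M$ so large that $\sum_{k=M+1}^{+\infty} b^k < \epsilon/2$; then for any $N > 2M$, using that $b^k$ is non-increasing,
\begin{equation*}
\frac{N}{2}\, b^N \;\le\; \sum_{k=\lceil N/2 \rceil}^{N} b^k \;\le\; \sum_{k=M+1}^{+\infty} b^k \;<\; \frac{\epsilon}{2},
\end{equation*}
so $N b^N < \epsilon$ for all sufficiently large $N$, which is exactly $b^N = o(\frac{1}{N})$, and then $\min_{i\le N} a^i = b^N = o(\frac{1}{N})$ as claimed.

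The only delicate step is the last one, where one must combine monotonicity of $b^k$ with a tail estimate to upgrade the weaker bound $b^k = O(\frac{1}{k})$ (which is immediate from $k b^k \le \sum_{i=1}^k b^i \le \sum_{i=1}^{+\infty} b^i$ together with monotonicity) to the sharper $b^k = o(\frac{1}{k})$. This is the only spot where one cannot simply read the conclusion off from summability, but the tail-of-series argument sketched above is short and standard, so I do not anticipate any real obstruction.
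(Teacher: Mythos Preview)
Your argument is correct. The paper does not actually prove this lemma: it merely quotes the result from \cite{Shi-Ling-Wu-Yin:siamjopt:2015} and moves on, so there is nothing to compare your approach against. Your proof of (i) is the obvious one, and your proof of (ii) via the running minimum $b^k$ together with the standard tail-block estimate for non-increasing summable sequences is the natural route; the bounds $\frac{N}{2}b^N \le \sum_{k=\lceil N/2\rceil}^N b^k$ and $\lceil N/2\rceil \ge M+1$ for $N>2M$ check out as written.
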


The main results of this subsection are presented in the next theorem.
\begin{theorem}
Let $f_i \in \Gamma_0(\mathbb{R}^{n_i})$ and $A_i$ an $m \times n_i$ matrix for $i\in\mathbb{N}_s$.
Let  $\alpha_i>0$ for $i\in\mathbb{N}_s$ and $\beta>0$. Let $\mathcal{T}$ and $E$ be defined as \eqref{eq:def-T}  and \eqref{def:e} respectively.
Let the sequence $\{v^k:k\in\mathbb{N}\}$ be generated from \eqref{eq:general-ite} for any given $v^0, v^1\in\mathbb{R}^{n+m}$.  Suppose that $T_\mathcal{M}$ is well-defined. If $\mathcal{M}$ satisfies Condition-M, then
\begin{itemize}
\item [(i)] the sequence $\{v^k: k\in\mathbb{N}\}$ has $O(\frac{1}{k})$ convergence in the ergodic sense, that is
\begin{equation}
\frac{1}{k}\sum_{i=1}^k\|v^{i+1}-v^i\|_2^2=O(\frac{1}{k}),
\end{equation}
\item [(ii)]  the running minimal of progress, $\min_{i\le k}\{\|v^{i+1}-v^i\|_2^2\}$, has $o(\frac{1}{k})$ convergence.
\end{itemize}
\end{theorem}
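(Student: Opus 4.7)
The plan is to reduce both items to the single summability statement
\[
\sum_{i=1}^{\infty} \|v^{i+1}-v^i\|_2^2 < +\infty,
\]
after which Lemma~\ref{lema:running} applied to $a^i := \|v^{i+1}-v^i\|_2^2$ yields (i) and (ii) immediately. So the work is entirely in proving summability, and everything hinges on mining a Lyapunov-type energy decrease from Condition-M, in the same spirit as the convergence proof behind Theorem~\ref{thm:weakConve}.

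First I would fix a solution $v^*$ of $v = (\mathcal{T}\circ E)(v)$, which exists by the discussion following Theorem~\ref{PropSolu}, and note that $v^* = T_{\mathcal{M}}(v^*,v^*)$. I would then apply Proposition~\ref{prop:tmWFN}(i), the weak firm nonexpansiveness of $T_{\mathcal{M}}$, to the pairs $(v^{k+1},v^k,v^{k-1})$ and $(v^*,v^*,v^*)$. Using $M_0 = M_1+M_2$ to rearrange, this gives an inequality of the form
\[
\langle v^{k+1}-v^*,\, M_1(v^{k+1}-v^k)+M_2(v^{k+1}-v^{k-1})\rangle \le 0.
\]
This is the two-step analogue of the standard Fejér-type inequality, and it is exactly the estimate on which the proof of Theorem~\ref{thm:weakConve} in \cite{Li-Shen-Xu-Zhang:AiCM:14} is built.

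Next, I would introduce the candidate Lyapunov energy
\[
\mathcal{E}_k := \|v^k - v^*\|_H^2 + \|v^k - v^{k-1}\|_{M_2}^2,
\]
with $H = M_0+M_2 \in \mathbb{S}^{n+m}_+$ as in Condition-M. Expanding the previous inner product using the polarization identities $2\langle a,Ma\rangle = \|a\|_M^2 + \text{cross terms}$ for the symmetric part of the $M_i$, and carefully isolating the quadratic term $\|v^{k+1}-v^k\|$, I expect to obtain a recursion of the form
\[
\mathcal{E}_{k+1} + c\,\|v^{k+1}-v^k\|_{H}^2 \le \mathcal{E}_k,
\]
for some constant $c>0$. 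This is where the spectral condition (iii) of Condition-M enters: $\|H^{-1/2}M_2 H^{-1/2}\|_2 < \tfrac12$ is exactly what guarantees positivity of the constant $c$, via a Cauchy–Schwarz / Young estimate on the cross term $\langle v^{k+1}-v^k,M_2(v^k-v^{k-1})\rangle$. Telescoping from $k=1$ to $k=N$ and using $\mathcal{E}_{N+1}\ge 0$ produces $\sum_{k=1}^{N} \|v^{k+1}-v^k\|_H^2 \le \mathcal{E}_1/c$; since $H$ is positive definite on the finite-dimensional space $\mathbb{R}^{n+m}$, the norms $\|\cdot\|_H$ and $\|\cdot\|_2$ are equivalent, so summability in $\|\cdot\|_2$ follows.

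The main obstacle, and the only delicate step, is the second paragraph above: identifying the correct Lyapunov energy $\mathcal{E}_k$ and proving the descent inequality with a strictly positive coefficient in front of $\|v^{k+1}-v^k\|^2$. The natural pitfall is that $M_1$ and $M_2$ need not be symmetric, so one must work only with their symmetric parts when applying polarization, and then absorb the antisymmetric contributions using Condition-M(iii). Once the coefficient $c>0$ is established (this is precisely where the strict inequality $<\tfrac12$ matters), the remainder is telescoping plus Lemma~\ref{lema:running}, with no further subtlety.
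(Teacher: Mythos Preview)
Your proposal is correct and follows essentially the same route as the paper: reduce (i) and (ii) to summability of $\|v^{i+1}-v^i\|_2^2$ via Lemma~\ref{lema:running}, and obtain summability from a Lyapunov-type descent driven by weak firm nonexpansiveness and Condition-M. The paper does not re-derive the descent inequality but simply invokes Lemma~4.4 of \cite{Li-Shen-Xu-Zhang:AiCM:14}, which already packages the telescoped estimate
\[
\|e^{k}\|_H^2\le 2\|e^2\|_H^2+2\|\widetilde M\|_2^2\|r^2\|_H^2-2\langle e^2, M_2 r^2\rangle-\Bigl(\tfrac{1}{2}-2\|\widetilde M\|_2^2\Bigr)\sum_{i=2}^{k-1}\|r^{i+1}\|_H^2,
\]
with $e^i=v^i-v^*$, $r^i=v^i-v^{i-1}$, $\widetilde M=H^{-1/2}M_2H^{-1/2}$; Condition-M(iii) gives the positive coefficient, and positive definiteness of $H$ converts $\|\cdot\|_H$-summability into $\|\cdot\|_2$-summability. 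Your plan is exactly the derivation of that lemma. One small caution: your tentative energy $\|v^k-v^{k-1}\|_{M_2}^2$ is not quite the right correction term since $M_2$ is neither symmetric nor semidefinite; as the cited inequality shows, the working Lyapunov functional carries $\|r^k\|_H^2$ with coefficient $2\|\widetilde M\|_2^2$ together with a cross term $\langle e^k,M_2 r^k\rangle$, but you already flagged this as the delicate step.
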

\begin{proof}
By Lemma \ref{lema:running}, we only need to prove \begin{equation}\label{eq:sumable}
\sum_{i=1}^{+\infty}\|v^{i+1}-v^i\|_2^2<+\infty.
\end{equation}

By the definition of $T_\mathcal{M}$, the sequence $\{v^k: k\in\mathbb{N}\}$ generated from \eqref{eq:general-ite} can also be generated by \eqref{eq:gene-ite2} for the same given $v^0, v^1$. Since $T_\mathcal{M}$ is weakly firmly nonexpansive with respect to $\mathcal{M}$ and $\mathcal{M}$ satisfies Condition-M, by Lemma 4.4 of \cite{Li-Shen-Xu-Zhang:AiCM:14}, we have for any $k\ge 3$ that
\begin{equation}\label{eq:eq8}
\|e^{k}\|_H^2\le 2\|e^2\|_H^2+2\|\widetilde {M}\|_2^2\|r^2\|_H^2-2\langle e^2, M_2r^2\rangle-(\frac{1}{2}-2\|\widetilde M\|_2^2)\sum_{i=2}^{k-1}\|r^{i+1}\|_H^2,
\end{equation}
where $e^i:=v^i-v$ for $v$ a fixed-point of $T_\mathcal{M}$, $r^i:=v^{i}-v^{i-1}$ and $\widetilde M:= H^{-1/2}M_2H^{-1/2}$. By (iii) of Condition-M, we have $\frac{1}{2}-2\|\widetilde M\|_2^2>0$. Then \eqref{eq:sumable} is obtained immediately from  \eqref{eq:eq8} and the fact that $H\in \mathbb{S}_+^{n+m}$.
\end{proof}
\subsection{Partial Primal-dual Gap $O(\frac{1}{k})$ Convergence Rate }
In this subsection, we study the convergence rate of the proposed iterative algorithm \eqref{eq:general-ite} in the  sense of the partial primal-dual gap. We prove that   iterative scheme \eqref{eq:general-ite} has $O(\frac{1}{k})$ convergence rate in the sense of the partial primal-dual gap.

We first introduce the notion of the partial primal-dual gap for  convex problem \eqref{model}. To this end, we review the primal-dual formulation of problem \eqref{model}, that is
\begin{equation}\label{eq:pri-dualProb}
\min\{\max\{\sum_{i=1}^s f_i(x_i)-\iota_{C}^*(y)+\langle Ax, y\rangle: y\in\mathbb{R}^m\}: x_i\in\mathbb{R}^{n_i}, i\in\mathbb{N}_s\}.
\end{equation}
One can refer to \cite{Bauschke-Combettes:11} for more details. For two bounded sets $B_1\subseteq\mathbb{R}^n$ and $B_2\subseteq\mathbb{R}^m$, the partial primal-dual gap for  problem \eqref{model} at point $v:=(x_1,\dots,x_s, y)\in\mathbb{R}^{n_1}\times\dots\times\mathbb{R}^{n_s}\times \mathbb{R}^m$  is defined as
\begin{equation}
\begin{array}{rcl}
\mathcal{G}_{B_1\times B_2}(v)&:=&\max\{\sum_{i=1}^sf_i(x_i)-\iota_{C}^*(y')+\langle Ax, y'\rangle: y'\in B_2\}\\
&&-\min\{\sum_{i=1}^sf_i(x_i')-\iota_{C}^*(y)+\langle  Ax', y\rangle: x'\in B_1\}.
\end{array}
\end{equation}
 We refer to \cite{Chambolle-Pock:JMIV11} for more details on the partial primal-dual gap.

In order to analyze the convergence rate of iterative scheme \eqref{eq:general-ite}, we define $G:\mathbb{R}^{n+m}\times\mathbb{R}^{n+m}\rightarrow\overline{\mathbb{R}}$ by
\begin{equation}\label{eq:G}
G(v, v'):=\Phi(v)-\Phi(v')+\langle v', S_{A} v\rangle,
\end{equation}
where $\Phi$ and $S_{A}$ are defined as \eqref{def:phi} and \eqref{def:sa} respectively. For $(v, v')\in\mathbb{R}^{n+m}\times\mathbb{R}^{n+m}$, where $v':=(x_1',\dots, x_s', y')\in\mathbb{R}^{n_1}\times\dots\times\mathbb{R}^{n_s}\times\mathbb{R}^m$ and $v:=(x_1,\dots,x_s, y)\in\mathbb{R}^{n_1}\times\dots\times\mathbb{R}^{n_s}\times\mathbb{R}^m$, one can check that \eqref{eq:G}  is equivalent to
$$
G(v, v')=\sum_{i=1}^sf_i(x_i)-\iota_{C}^*(y')+\langle Ax, y'\rangle-(\sum_{i=1}^sf_i(x_i')-\iota_{C}^*(y)+\langle  Ax', y\rangle).
$$
Therefore, in order to analyze the partial primal-dual gap at point $v:=(x, y)\in\mathbb{R}^n\times\mathbb{R}^m$, we only need to estimate the upper bound of $G(v, v')$ for $v'\in B_1\times B_2$.
The next lemma presents an important  estimation of $G(v^{k+1}, v)$ for any $v\in\mathbb{R}^{n+m}$.

\begin{lemma}\label{lema: InitialPPriamalDualIne}
Let $f_i \in \Gamma_0(\mathbb{R}^{n_i})$, $A_i$ an $m \times n_i$ matrix,
   $\alpha_i>0$ for $i\in\mathbb{N}_s$ and $\beta>0$.  Let $\mathcal{T}$ and $E$ be defined as \eqref{eq:def-T}  and \eqref{def:e} respectively, $\mathcal{M}:=\{M_0, M_1, M_2\}$ be a set of $(n+m) \times (n+m)$ matrices.
Let $\{v^k:=(x^k, y^k)\in\mathbb{R}^n\times\mathbb{R}^m:  k\in\mathbb{N}\}$ be generated from iterative scheme \eqref{eq:general-ite}. For all $v\in\mathbb{R}^{n+m}$ there holds
\begin{equation}\label{eq:parDualIne}
G( v^{k+1}, v)\le \langle M_1v^k+M_2v^{k-1}-M_0v^{k+1}, v^{k+1}-v\rangle.
\end{equation}
\end{lemma}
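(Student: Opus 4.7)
The plan is to exploit the subdifferential characterization of the proximity operator together with the skew-symmetry of $S_A$. Since $\mathcal{T} = \mathrm{prox}_{\Phi,R}$ by Lemma \ref{lemma:proc-comb}, the defining equation \eqref{eq:general-ite} for $v^{k+1}$ is equivalent, via \eqref{eq:sub-prox-0}, to an inclusion expressing that the argument minus $v^{k+1}$, weighted by $R$, lies in $\partial \Phi(v^{k+1})$. So I would first compute this inclusion explicitly.

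Concretely, writing $w^k := (E - R^{-1}M_0)v^{k+1} + R^{-1}M_1 v^k + R^{-1}M_2 v^{k-1}$, the relation $v^{k+1} = \mathrm{prox}_{\Phi,R}(w^k)$ yields $R(w^k - v^{k+1}) \in \partial \Phi(v^{k+1})$. Using $RE = R + S_A$ (already noted before Proposition \ref{prop:tmWFN}), the left-hand side simplifies to
\begin{equation*}
S_A v^{k+1} - M_0 v^{k+1} + M_1 v^k + M_2 v^{k-1} \in \partial \Phi(v^{k+1}).
\end{equation*}
Applying the subdifferential inequality with an arbitrary test point $v \in \mathbb{R}^{n+m}$ then gives
\begin{equation*}
\Phi(v^{k+1}) - \Phi(v) \le \langle S_A v^{k+1} - M_0 v^{k+1} + M_1 v^k + M_2 v^{k-1},\, v^{k+1} - v\rangle.
\end{equation*}

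To finish, I would add $\langle v, S_A v^{k+1}\rangle$ to both sides so that the left-hand side becomes exactly $G(v^{k+1}, v)$ by definition \eqref{eq:G}. The only remaining task is to handle the $S_A$-contribution on the right, namely $\langle S_A v^{k+1}, v^{k+1} - v\rangle + \langle v, S_A v^{k+1}\rangle$. Expanding, the cross terms $-\langle S_A v^{k+1}, v\rangle + \langle v, S_A v^{k+1}\rangle$ cancel (real inner product), leaving $\langle S_A v^{k+1}, v^{k+1}\rangle$, which vanishes by skew-symmetry of $S_A$ (definition \eqref{def:sa}). What remains is precisely the claimed bound \eqref{eq:parDualIne}.

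I do not expect any serious obstacle: the proof is essentially a bookkeeping argument chaining the proximity–subdifferential equivalence, convexity of $\Phi$, and skew-symmetry of $S_A$. The only care needed is the algebraic simplification $RE - M_0 = R + S_A - M_0$ so that the $R v^{k+1}$ terms cancel and one ends up with a clean inclusion involving only $S_A$, $M_0$, $M_1$, $M_2$ before invoking convexity.
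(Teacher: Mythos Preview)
Your proposal is correct and follows essentially the same route as the paper: both derive the inclusion $S_A v^{k+1} - M_0 v^{k+1} + M_1 v^k + M_2 v^{k-1} \in \partial \Phi(v^{k+1})$ from Lemma~\ref{lemma:proc-comb}, \eqref{eq:sub-prox-0} and $RE = R + S_A$, then apply the subdifferential inequality and use skew-symmetry of $S_A$ to eliminate the $\langle S_A v^{k+1}, v^{k+1}\rangle$ term. The only cosmetic difference is that the paper rearranges the $S_A$-terms in one step rather than explicitly adding $\langle v, S_A v^{k+1}\rangle$ to both sides as you do.
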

\begin{proof}
From iterative scheme \eqref{eq:general-ite}, Lemma \ref{lemma:proc-comb} and \eqref{eq:sub-prox-0}, we have
$$
R(E-R^{-1}M_0-I)v^{k+1}+M_1v^k+M_2v^{k-1}\in\partial \Phi( v^{k+1}).
$$
Due to $E=I+R^{-1}S_{A}$, we obtain that
$$
-M_0v^{k+1}+M_1v^k+M_2v^{k-1}+S_{A}v^{k+1}\in\partial \Phi( v^{k+1}).
$$
By  the definition of subdifferential and the convexity of $\Phi$, we have for any $v\in\mathbb{R}^{n+m}$ that
$$
\Phi( v^{k+1})+\langle M_1v^k+M_2v^{k-1}-M_0v^{k+1}, v-v^{k+1}\rangle+\langle v-v^{k+1}, S_{A} v^{k+1}\rangle\le\Phi(v).
$$
Since $S_{A}$ is skew-symmetric, the above inequality is equivalent to
$$
\Phi( v^{k+1})+\langle v, S_{A} v^{k+1}\rangle-\Phi(v)\le \langle M_1v^k+M_2v^{k-1}-M_0v^{k+1}, v^{k+1}-v\rangle.
$$
Then, we obtain \eqref{eq:parDualIne} immediately by   the definition of $G$.
\end{proof}

We next study the partial primal-dual gap at $\bar v_K:=\frac{\sum_{k=2}^{K+1} v^k}{K}$.

\begin{lemma}\label{lema: PPriamalDualIne}
Let $\{v^k:  k\in\mathbb{N}\}$  be generated from iterative scheme \eqref{eq:general-ite}. Under the same assumptions  of Lemma \ref{lema: InitialPPriamalDualIne}, if $\mathcal{M}$ satisfies Condition-M,  then for all $v\in\mathbb{R}^{n+m}$ there holds
\begin{equation}\label{eq:AvepDuaIne}
G(\bar v_K, v)\le \frac{\frac{3}{4}\|v^1-v\|_H^2+\frac{1}{2}\|v^1-v^0\|_H^2}{K},
\end{equation}
where $H:=M_0+M_2$ and $\bar v_K:=\frac{\sum_{k=2}^{K+1} v^k}{K}$.
\end{lemma}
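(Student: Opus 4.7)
The plan is to start from Lemma \ref{lema: InitialPPriamalDualIne}, sum the resulting inequalities over $k=1,\dots,K$, and then invoke the convexity of $G(\cdot,v)$ in its first argument to transfer the bound on the running sum to $\bar v_K$. The very first step is to rewrite the right-hand side of \eqref{eq:parDualIne} using the identities $M_0=M_1+M_2$ and $H=M_0+M_2$, which gives
\[
M_1v^k+M_2v^{k-1}-M_0v^{k+1}=H(v^k-v^{k+1})+M_2(r^{k+1}-r^k),
\]
where $r^j:=v^j-v^{j-1}$. Because $H\in\mathbb{S}^{n+m}_+$ is symmetric, the three-point identity produces
\[
2\langle H(v^k-v^{k+1}),v^{k+1}-v\rangle=\|v^k-v\|_H^2-\|v^{k+1}-v\|_H^2-\|r^{k+1}\|_H^2,
\]
so summing this part over $k$ telescopes in $\|v^k-v\|_H^2$ and leaves $-\tfrac{1}{2}\sum_{k=1}^{K}\|r^{k+1}\|_H^2$.

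For the $M_2$ piece, I would use the algebraic identity
\[
\langle M_2(r^{k+1}-r^k),v^{k+1}-v\rangle=\bigl(\langle M_2r^{k+1},v^{k+1}-v\rangle-\langle M_2r^k,v^k-v\rangle\bigr)-\langle M_2r^k,r^{k+1}\rangle,
\]
which telescopes the first bracket and isolates the "diagonal" cross terms. After summation over $k=1,\dots,K$, the surviving pieces are the two boundary terms $\langle M_2r^{K+1},v^{K+1}-v\rangle$ and $-\langle M_2r^1,v^1-v\rangle$, together with $-\sum_{k=1}^{K}\langle M_2r^k,r^{k+1}\rangle$. Each of these is controlled via $|\langle M_2x,y\rangle|\le\rho\|x\|_H\|y\|_H$ with $\rho:=\|\widetilde M\|_2=\|H^{-1/2}M_2H^{-1/2}\|_2<1/2$ (by (iii) of Condition-M) followed by AM-GM.

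I would then carry out the bookkeeping. The estimate $\sum_{k=1}^K|\langle M_2r^k,r^{k+1}\rangle|\le\tfrac{1}{4}\|r^1\|_H^2+\tfrac{1}{2}\sum_{k=2}^{K}\|r^k\|_H^2+\tfrac{1}{4}\|r^{K+1}\|_H^2$ combines with $-\tfrac{1}{2}\sum_{k=2}^{K+1}\|r^k\|_H^2$ to leave only $\tfrac{1}{4}\|r^1\|_H^2-\tfrac{1}{4}\|r^{K+1}\|_H^2$. The terminal boundary term $|\langle M_2r^{K+1},v^{K+1}-v\rangle|\le\tfrac{1}{4}\|r^{K+1}\|_H^2+\tfrac{1}{4}\|v^{K+1}-v\|_H^2$ is then absorbed into $-\tfrac{1}{4}\|r^{K+1}\|_H^2-\tfrac{1}{2}\|v^{K+1}-v\|_H^2$ and vanishes (leaving a nonpositive residual). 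The remaining boundary term $|\langle M_2r^1,v^1-v\rangle|\le\tfrac{1}{4}\|r^1\|_H^2+\tfrac{1}{4}\|v^1-v\|_H^2$ combines with the surviving $\tfrac{1}{2}\|v^1-v\|_H^2+\tfrac{1}{4}\|r^1\|_H^2$ to produce exactly $\tfrac{3}{4}\|v^1-v\|_H^2+\tfrac{1}{2}\|r^1\|_H^2$.

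To close, since $\Phi$ is convex and $\langle y',S_Av\rangle$ is linear in $v$, the map $v\mapsto G(v,v')$ is convex. Jensen's inequality applied to $\bar v_K=\tfrac{1}{K}\sum_{k=2}^{K+1}v^k$ yields $G(\bar v_K,v)\le\tfrac{1}{K}\sum_{k=1}^{K}G(v^{k+1},v)$, and combining with the bound above gives \eqref{eq:AvepDuaIne}. The main obstacle is the constant-tracking in the Cauchy--Schwarz/AM-GM step: one must use the strict inequality $\rho<1/2$ at the right places so that the neighbor cross terms are absorbed by the telescoping $\|r^j\|_H^2$ budget, while still leaving the boundary residuals $\tfrac{1}{4}\|r^1\|_H^2$ and $\tfrac{1}{4}\|v^1-v\|_H^2$ that add up, together with the telescoping $\tfrac{1}{2}\|v^1-v\|_H^2$, to precisely the factors $\tfrac{3}{4}$ and $\tfrac{1}{2}$.
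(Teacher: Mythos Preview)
Your proof is correct and follows essentially the same route as the paper: both decompose the right-hand side of \eqref{eq:parDualIne} into an $H$-term handled by the three-point (polarization) identity and an $M_2$-term $\langle M_2(r^{k+1}-r^k),e^{k+1}\rangle$ rewritten as a telescoping piece $\langle M_2r^{k+1},e^{k+1}\rangle-\langle M_2r^k,e^k\rangle$ minus the cross term $\langle M_2r^k,r^{k+1}\rangle$, then control the cross and boundary terms via $\|H^{-1/2}M_2H^{-1/2}\|_2<1/2$ and AM--GM. The only cosmetic difference is that the paper carries a free weight $a$ in the Young-type bounds and sets $a=\tfrac12$ at the end, whereas you invoke $\rho<\tfrac12$ directly; the constants $\tfrac34$ and $\tfrac12$ come out identically.
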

\begin{proof}
For simplicity, we define for $k\in\mathbb{N}$, $
e^k:=v^k-v, r^k:=v^k-v^{k-1}.
$
By Lemma \ref{lema: InitialPPriamalDualIne} and Item (i) of Condition-M, we have
$$
G(v^{k+1}, v)\le \langle M_1e^k+M_2e^{k-1}, e^{k+1}\rangle-\langle M_0e^{k+1}, e^{k+1}\rangle.
$$
Using $H:=M_0+M_2$ and $M_0=M_1+M_2$, the above inequality implies that
\begin{equation}\label{eq:eq9}
G(v^{{k+1}}, v)\le D_1+D_2,
\end{equation}
where $D_1:=-\|e^{k+1}\|_H^2+\langle e^{k+1}, He^k\rangle$ and $D_2:=\langle e^{k+1}, M_2e^{k+1}-2M_2e^k+M_2e^{k-1}\rangle$.
By the relationship $r^{k+1}=e^{k+1}-e^k$ and $\langle a, Hb\rangle=\frac{1}{2}(\|a\|_H^2+\|b\|_H^2-\|a-b\|_H^2)$ for $a, b\in\mathbb{R}^{n+m}$, we obtain that \begin{equation}
\label{eq:eq10}
D_1=\frac{1}{2}(-\|e^{k+1}\|_H^2+\|e^k\|_H^2-\|r^{k+1}\|_H^2).
\end{equation}
We also have
\begin{eqnarray}\label{eq:eq11}
D_2&=&\langle e^{k+1}, M_2(r^{k+1}-r^k)\rangle\nonumber\\
&=&\langle e^{k+1}, M_2r^{k+1}\rangle-\langle r^{k+1}, M_2r^{k}\rangle-\langle e^{k}, M_2r^{k}\rangle,
\end{eqnarray}
where the first equality is obtained by  the relationship $r^{k}=e^{k}-e^{k-1}$ and the second equality holds due to $e^{k+1}=r^{k+1}+e^k$. Let $\widetilde M:=(H^\dagger)^{1/2}M_2(H^\dagger)^{1/2}$. Then it follows that for any $a>0$,
\begin{equation}\label{eq:eq12}
|\langle r^{k+1}, M_2r^k\rangle|\le \frac{a}{2}\|r^{k+1}\|_H^2+\frac{\|\widetilde M\|_2^2}{2a}\|r^k\|_H^2.
\end{equation}
 Thus, by \eqref{eq:eq9}, \eqref{eq:eq10}, \eqref{eq:eq11} and \eqref{eq:eq12}, we have
 \begin{eqnarray}\label{eq:eq13}
 G(v^{k+1}, v)&\le& \frac{1}{2}(-\|e^{k+1}\|_H^2+\|e^k\|_H^2)
 -\frac{1}{2}(1-a-\frac{\|\widetilde M\|_2^2}{a})\|r^{k+1}\|_H^2\nonumber\\
 &&+\frac{\|\widetilde M\|_2^2}{2a}(-\|r^{k+1}\|_H^2+\|r^{k}\|_H^2)
 +\langle e^{k+1}, M_2r^{k+1}\rangle-\langle e^k, M_2r^k\rangle.
 \end{eqnarray}

Summing inequality \eqref{eq:eq13} from $k=1$ to $k=K$, we have
\begin{eqnarray}\label{eq:eq14}
\sum_{k=1}^{K} G(v^{k+1}, v)&\le& \frac{1}{2}(-\|e^{K+1}\|_H^2+\|e^1\|_H^2)
-\frac{1}{2}(1-a-\frac{\|\widetilde M\|_2^2}{a})\sum_{k=2}^{K+1}\|r^{k}\|_H^2 \nonumber \\
& & +\frac{\|\widetilde M\|_2^2}{2a}(-\|r^{K+1}\|_H^2+\|r^1\|_H^2)
+\langle e^{K+1}, M_2r^{K+1}\rangle-\langle e^1, M_2r^1\rangle.
 \end{eqnarray}
By applying  $$|\langle e^{k}, M_2r^k\rangle|\le \frac{a}{2}\|e^{k}\|_H^2+\frac{\|\widetilde M\|_2^2}{2a}\|r^k\|_H^2$$ for $k=K+1$ and $k=1$ to the last two terms of \eqref{eq:eq14}, we obtain that
\begin{eqnarray}\label{eq:eq15}
\sum_{k=1}^{K} G(v^{k+1}, v)&\le& \frac{1}{2}(-(1-a)\|e^{K+1}\|_H^2+(1+a)\|e^1\|_H^2)\nonumber\\
&&-\frac{1}{2}(1-a-\frac{\|\widetilde M\|_2^2}{a})\sum_{k=2}^{K+1}\|r^{k}\|_H^2
+\frac{\|\widetilde M\|_2^2}{a}\|r^1\|_H^2.
\end{eqnarray}
Setting $a=\frac{1}{2}$, it follows that $1-a-\frac{\|\widetilde M\|_2^2}{a}>0$ due to  (iii) of Condition-M. This together with \eqref{eq:eq15} yields
\begin{equation}\label{eq:SumIne}
\frac{1}{K}\sum_{k=1}^{K} G(v^{k+1}, v)\le \frac{\frac{3}{4}\|e^1\|_H^2+\frac{1}{2}\|r^1\|_H^2}{K}.
\end{equation}
Since $G(\cdot, v)$ is convex, we conclude that $G(\bar v_K, v)\le \frac{1}{K}\sum_{k=1}^{K}G(v^{k+1}, v)$, which together with inequality \eqref{eq:SumIne} implies \eqref{eq:AvepDuaIne}.
\end{proof}

Now, we are ready to present the partial primal-gap convergence rate of the proposed algorithm \eqref{eq:general-ite} in the next theorem.
\begin{theorem}
Let $\{v^k:  k\in\mathbb{N}\}$  be generated from iterative scheme \eqref{eq:general-ite}. Under the same assumptions  of Lemma \ref{lema: InitialPPriamalDualIne}, if $\mathcal{M}$ satisfies Condition-M,  then iterative scheme   \eqref{eq:general-ite} has  $O(\frac{1}{K})$ convergence rate in the partial primal-dual gap sense, that is
$$
\mathcal{G}_{B_1\times B_2}(\bar v_K)=O(\frac{1}{K}),
$$
where $B_1\subseteq \mathbb{R}^n$ and $B_2\subseteq\mathbb{R}^m$ are bounded and $\bar v_K:=\frac{1}{K}\sum_{k=2}^{K+1} v^k$.
\end{theorem}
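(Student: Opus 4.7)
The plan is to obtain the result as an immediate corollary of Lemma \ref{lema: PPriamalDualIne} by taking the supremum over the bounded test set $B_1\times B_2$. The key observation is that the upper bound in \eqref{eq:AvepDuaIne} depends on $v$ only through the term $\|v^1-v\|_H^2$, which is a continuous function of $v$ that is bounded on any bounded set.

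First, I would recall the identity
\begin{equation*}
G(v,v')=\sum_{i=1}^s f_i(x_i)-\iota_{C}^*(y')+\langle Ax,y'\rangle-\Bigl(\sum_{i=1}^s f_i(x_i')-\iota_{C}^*(y)+\langle Ax',y\rangle\Bigr)
\end{equation*}
established just before Lemma \ref{lema: InitialPPriamalDualIne}, which allows us to rewrite the partial primal-dual gap as
\begin{equation*}
\mathcal{G}_{B_1\times B_2}(v)=\sup_{v'\in B_1\times B_2} G(v,v').
\end{equation*}
Applying this with $v=\bar v_K$ and invoking Lemma \ref{lema: PPriamalDualIne}, which yields \eqref{eq:AvepDuaIne} for every fixed $v\in\mathbb{R}^{n+m}$, I would obtain
\begin{equation*}
\mathcal{G}_{B_1\times B_2}(\bar v_K)\le \sup_{v\in B_1\times B_2}\frac{\tfrac{3}{4}\|v^1-v\|_H^2+\tfrac{1}{2}\|v^1-v^0\|_H^2}{K}.
\end{equation*}

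Next, since $B_1\subseteq\mathbb{R}^n$ and $B_2\subseteq\mathbb{R}^m$ are bounded and $H\in\mathbb{S}_+^{n+m}$ by (ii) of Condition-M, the function $v\mapsto \|v^1-v\|_H^2$ is continuous and bounded on $B_1\times B_2$. Hence there exists a finite constant
\begin{equation*}
C:=\tfrac{3}{4}\sup_{v\in B_1\times B_2}\|v^1-v\|_H^2+\tfrac{1}{2}\|v^1-v^0\|_H^2<+\infty,
\end{equation*}
and therefore $\mathcal{G}_{B_1\times B_2}(\bar v_K)\le C/K$, which is the desired $O(1/K)$ rate.

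There is essentially no hard step: all the technical work — the subdifferential inclusion, the handling of the skew-symmetric matrix $S_A$, the telescoping of the $\|e^k\|_H^2$ terms, and the use of Condition-M (iii) to absorb the cross terms involving $M_2 r^k$ — has already been carried out in Lemmas \ref{lema: InitialPPriamalDualIne} and \ref{lema: PPriamalDualIne}. The only minor point to verify is that the bound \eqref{eq:AvepDuaIne} indeed holds for \emph{all} $v\in\mathbb{R}^{n+m}$ (which it does, since Lemma \ref{lema: InitialPPriamalDualIne} is pointwise in $v$), so taking the supremum over the bounded set $B_1\times B_2$ preserves the $1/K$ factor. The proof is therefore a short two-line argument built on the previous lemma.
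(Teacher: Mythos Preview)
Your proposal is correct and follows exactly the same approach as the paper's proof, which is a one-line argument stating that the result is a direct consequence of Lemma \ref{lema: PPriamalDualIne} and the boundedness of $B_1$ and $B_2$. Your write-up simply spells out the details of that sentence.
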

\begin{proof}
This is a direct consequence of Lemma \ref{lema: PPriamalDualIne} and the boundedness of sets $B_1$ and $B_2$.
\end{proof}
\section{Specific Algorithms}\label{sec:specificAlg}
In this section, we derive several specific two-step algorithms from the iterative scheme \eqref{eq:general-ite} by choosing specific sets of $(n+m) \times (n+m)$ matrices $\mathcal{M}:=\{M_0,M_1,M_2\}$ which satisfy Condition-M.
\subsection{First-order primal-dual Algorithms}
In this subsection, we design a class of explicit one-step algorithms,  which only utilize the vectors of the current step to update the vectors of the next step. In such case,  $M_2=0$ and Condition-M reduces to $M_0=M_1$ and $M_0\in\mathbb{S}_+^{n+m}$.

We begin with constructing $M_0$. If the matrix $E-R^{-1}M_0$ is strictly upper or lower triangular, then the resulting algorithms will be  explicit. By \eqref{def:e}, $M_0=M_1$ can be chosen as $Z_1$ or $Z_2$  with
$$Z_1:=
 \begin{bmatrix}
 P&-A^\top\\
 -A& \frac{1}{\beta}I
 \end{bmatrix},
 Z_2=\begin{bmatrix}
 P&A^\top\\
 A& \frac{1}{\beta}I
 \end{bmatrix},
 $$
where $P$ is defined by \eqref{def:P}.
By simple calculations, one can obtain that $\iota_C^*(\cdot)=\langle \cdot, b\rangle$ and thus $\mathrm{prox}_{\beta\iota_C^*}(y)=y-\beta b$ for $y\in\mathbb{R}^m$. Then,  iterative scheme \eqref{eq:general-ite} with respect to $Z_1$ and $Z_2$ become, respectively,
\begin{equation}
\label{eq:al1}
\begin{cases}
 x_i^{k+1}=\mathrm{prox}_{\frac{\alpha_i}{\beta}f_i}(x_i^k-\frac{\alpha_i}{\beta}A_i^\top y^k),~~i\in\mathbb{N}_s,\\
 y^{k+1}=y^k+\beta(\sum_{i=1}^s A_i(2x_i^{k+1}-x_i^k)-b),
 \end{cases}
 \end{equation}
 and
 \begin{equation}\label{eq:al2}
 \begin{cases}
y^{k+1}=y^k+\beta(\sum_{i=1}^s A_i x_i^k-b),\\
 x_i^{k+1}=\mathrm{prox}_{\frac{\alpha_i}{\beta}f_i}(x_i^k-\frac{\alpha_i}{\beta}A_i^\top (2y^{k+1}-y^k)),~~i\in\mathbb{N}_s.
  \end{cases}
  \end{equation}

We note that, algorithms \eqref{eq:al1} and \eqref{eq:al2} are actually special cases of the one-step first-order primal-dual algorithm \cite{Chambolle-Pock:JMIV11,Esser-Zhang-Chan:1order-PD-09,Li-Shen-Xu-Zhang:AiCM:14}, which solves the following optimization problem  \begin{equation}
\label{eq:eq17}
\min\{f(x)+g(Ax): x\in\mathbb{R}^n\}
 \end{equation}
with $f\in\Gamma_0(\mathbb{R}^n)$, $g\in\Gamma_0(\mathbb{R}^m)$ and $A$ an $m\times n$ matrix. Here, if we set
  $x:=(x_1,\dots,x_s)$,  $g:=\iota_{C}$ and $f:\mathbb{R}^{n_1}\times\dots\times\mathbb{R}^{n_s}\rightarrow\overline{\mathbb{R}}$ defined at $x$ as $f(x):=\sum_{i=1}^s f_i(x_i)$, then problem \eqref{eq:eq17} is exactly the optimization problem \eqref{model}.
 Clearly, algorithms \eqref{eq:al1} and \eqref{eq:al2} are  special cases of the one-step first-order primal-dual algorithm \cite{Chambolle-Pock:JMIV11,Li-Shen-Xu-Zhang:AiCM:14} by the fact that $
 \mathrm{prox}_{f, P}(x)=(\mathrm{prox}_{\frac{\alpha_1}{\beta}f_1}(x_1), \dots, \mathrm{prox}_{\frac{\alpha_s}{\beta}f_s}(x_s))$.
The corresponding convergence results are presented in the following theorem.
\begin{theorem}
 Let $f_i \in \Gamma_0(\mathbb{R}^{n_i})$, $A_i$ an $m \times n_i$ matrix, 
 $\alpha_i>0$ for $i\in\mathbb{N}_s$ and $\beta>0$.   Let the sequence $\{(x_1^k, \dots, x_s^k, y^k):k\in\mathbb{N}\}$ generated from \eqref{eq:al1} or \eqref{eq:al2} for any $(x_1^0, \dots, x_s^0, y^0)\in\mathbb{R}^n\times\mathbb{R}^m$. If $\|AQ\|_2<1$, where $Q:=\mathrm{diag}(\sqrt{\alpha_1}{\bf{1}}_{n_1},\dots,\sqrt{\alpha_s}{\bf{1}}_{n_s})$, then $\{(x_1^k, \dots, x_s^k, y^k):k\in\mathbb{N}\}$ converges and the sequence $\{x^k:k\in\mathbb{N}\}$ converges to a solution of problem \eqref{model}.
 \end{theorem}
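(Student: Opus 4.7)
The plan is to recognize that algorithms \eqref{eq:al1} and \eqref{eq:al2} are exactly the instances of iterative scheme \eqref{eq:general-ite} produced by taking $M_2 = 0$ together with $M_0 = M_1 = Z_1$ or $M_0 = M_1 = Z_2$, so convergence will be deduced as a direct corollary of Theorem~\ref{thm:itr_con}. With $M_2 = 0$, Condition-M collapses drastically: Item~(i) becomes the identity $M_0 = M_1$ (built into our choice), Item~(iii) becomes $0 < \tfrac{1}{2}$ (trivially true), so only Item~(ii) requires work, namely that $H = M_0 + M_2 = Z_j \in \mathbb{S}_+^{n+m}$. I also need to check that $T_\mathcal{M}$ is well-defined, but this is automatic from the construction of $Z_1$ and $Z_2$: the matrix $E - R^{-1}M_0$ is strictly upper (respectively strictly lower) triangular in block form, so \eqref{eq:general-ite} resolves into the sequential updates displayed in \eqref{eq:al1} and \eqref{eq:al2}, each determined uniquely by proximity evaluations.

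The heart of the proof is therefore showing that $Z_1 \succ 0$ (and, by the same argument, $Z_2 \succ 0$) precisely when $\|AQ\|_2 < 1$. I will invoke the Schur complement of the $(1,1)$-block $P$, which is itself positive definite since its diagonal entries $\beta/\alpha_i$ are positive. The Schur complement of $Z_1$ with respect to $P$ is
\begin{equation*}
\tfrac{1}{\beta} I - A P^{-1} A^\top = \tfrac{1}{\beta}\bigl( I - (AQ)(AQ)^\top\bigr),
\end{equation*}
where I used $P^{-1} = \tfrac{1}{\beta} Q^2$ with $Q^2 = \mathrm{diag}(\alpha_i \mathbf{1}_{n_i})$. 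Positive definiteness of this Schur complement is equivalent to $\|(AQ)(AQ)^\top\|_2 < 1$, i.e.\ to $\|AQ\|_2 < 1$, which is exactly the hypothesis. The same computation handles $Z_2$, because the off-diagonal blocks enter the Schur complement only through the product $A P^{-1} A^\top$, which is insensitive to their sign.

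With $Z_j \in \mathbb{S}_+^{n+m}$ established, Condition-M is verified for $\mathcal{M} = \{Z_j, Z_j, 0\}$, and Theorem~\ref{thm:itr_con} yields that $\{v^k\} = \{(x_1^k,\dots,x_s^k, y^k)\}$ converges to a fixed-point of $\mathcal{T}\circ E$ and, by Theorem~\ref{PropSolu}, $\{x^k\}$ converges to a solution of \eqref{model}. The main obstacle I anticipate is a purely bookkeeping one: confirming that the Schur complement criterion genuinely captures positive definiteness in the indefinite-off-diagonal saddle-point block form of $Z_1$ and $Z_2$, and that the condition $\|AQ\|_2 < 1$ matches the paper's scaling conventions for $P$, $Q$ and $\beta$. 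Once this algebra is written out cleanly, the rest of the argument is an immediate invocation of the convergence machinery already developed in Sections~\ref{sec:algorithm}--\ref{sec:converge}.
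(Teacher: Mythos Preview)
Your proposal is correct and follows essentially the same route as the paper: the paper's proof is simply ``apply Lemma~6.2 in \cite{Li-Shen-Xu-Zhang:AiCM:14} and Theorem~\ref{thm:itr_con}'', where the cited lemma is exactly the positive-definiteness fact you establish via the Schur complement of $P$ in $Z_j$. Your write-up in fact supplies the details the paper omits; the only quibble is that $E-R^{-1}Z_1$ is strictly block \emph{lower} triangular (nonzero $(2,1)$-block $2\beta A$) rather than upper, and conversely for $Z_2$, but this affects nothing in the argument.
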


 We omit the proof since it can be obtained immediately by applying Lemma 6.2 in \cite{Li-Shen-Xu-Zhang:AiCM:14} and Theorem \ref{thm:itr_con}.

To close this subsection, we remark that both algorithms \eqref{eq:al1} and \eqref{eq:al2}  do not take advantage of the separability of function $f$ and vector $x$. More precisely, the information of $x_j^{k+1}$ for $j=1,\dots, i-1$ is not used when we update $x_i^{k+1}$. We dedicate the next two subsections to developing new algorithms which make use of the block-wise Gauss-Seidel technique to update blocks $x_1, \dots, x_2, y$.

\subsection{Convergent Implicit Two-step Proximity Algorithms}\label{subsec:implicitAlg}
In this subsection, we propose a two-step implicit fixed-point proximity algorithm from iterative scheme \eqref{eq:general-ite}.
We begin with constructing the set of matrices $\mathcal{M}:=\{M_0, M_1, M_2\}$ by setting
 \begin{equation}\label{eq:MatrixMPADMMm0}
 M_0:=\begin{bmatrix}
 \frac{\beta}{\alpha_1}I&-\beta A_1^\top A_2&-\beta A_1^\top A_3&\dots&-\beta A_1^\top A_s&0\\
 0&\frac{\beta}{\alpha_2}I&-\beta A_2^\top A_3&\dots&-\beta A_2^\top A_s&0\\
 0&0&\frac{\beta}{\alpha_3}I&\dots&-\beta A_3^\top A_s&0\\
 \dots&\dots&\dots&\ddots&\dots&\dots\\
 0&0&0&\dots&\frac{\beta}{\alpha_s}I&0\\
 0&0&0&\dots&0&\frac{1}{\beta}I
 \end{bmatrix},
 \end{equation}
   \begin{equation}\label{eq:MatrixMPADMMm1}
 M_1:=\begin{bmatrix}
 \frac{\beta}{\alpha_1}I&-2\beta A_1^\top A_2&-2\beta A_1^\top A_3&\dots&-2\beta A_1^\top A_s&0\\
 0&\frac{\beta}{\alpha_2}I&-2\beta A_2^\top A_3&\dots&-2\beta A_2^\top A_s&0\\
 0&0&\frac{\beta}{\alpha_3}I&\dots&-2\beta A_3^\top A_s&0\\
 \dots&\dots&\dots&\ddots&\dots&\dots\\
 0&0&0&\dots&\frac{\beta}{\alpha_s}I&0\\
 0&0&0&\dots&0&\frac{1}{\beta}I
 \end{bmatrix},
 \end{equation}
 \begin{equation}\label{eq:MatrixMPADMMm2}
 M_2:=\begin{bmatrix}
 0&\beta A_1^\top A_2&\beta A_1^\top A_3&\dots&\beta A_1^\top A_s&0\\
 0&0&\beta A_2^\top A_3&\dots&\beta A_2^\top A_s&0\\
 0&0&0&\dots&\beta A_3^\top A_s&0\\
 \dots&\dots&\dots&\ddots&\dots&\dots\\
 0&0&0&\dots&0&0\\
 0&0&0&\dots&0&0
 \end{bmatrix}.
 \end{equation}
With this choice of matrices $M_0, M_1, M_2$, noting that $\mathrm{prox}_{\beta\iota_{C}^*}(y)=y-\beta b$,
iterative scheme \eqref{eq:general-ite} leads to
$$
y^{k+1}=y^k+\beta(\sum_{i=1}^s A_ix_i^{k+1}-b).
$$
We then replace $y^{k+1}$ by $y^k+\beta(\sum_{i=1}^s A_ix_i^{k+1}-b)$ as we update $x_i^{k+1}$ for $i\in\mathbb{N}_s$ in  iterative scheme \eqref{eq:general-ite}, we obtain that
\begin{equation}\label{eq:algPADMM}
 \begin{cases}
   x_j^{k+1}=\mathrm{prox}_{\frac{\alpha_j}{\beta}f_j}(x_j^k-\alpha_jA_j^\top(\sum_{i=1}^j A_ix_i^{k+1}+\sum_{i=j+1}^sA_i(2x_i^k-x_i^{k-1})-b)- \frac{\alpha_j}{\beta}A_j^\top y^k), j\in\mathbb{N}_s,\\
    y^{k+1}=y^k+\beta(\sum_{i=1}^sA_ix_i^{k+1}-b).
 \end{cases}
 \end{equation}


We point out the connections of the proposed algorithm \eqref{eq:algPADMM} with the proximal ADMM (PADMM). To this end, we introduce the augmented Lagrangian function for \eqref{model}
\begin{equation}\label{def:l}
\mathcal{L}(x_1, \dots, x_s, y):=\sum_{i=1}^s f_i(x_i)+\frac{\beta}{2}\|\sum_{i=1}^s A_ix_i-b\|_2^2+\langle y, \sum_{i=1}^s{A_ix_i}-b\rangle.
\end{equation}
The PADMM for \eqref{model} reads as
\begin{equation}\label{eq:PADMM}
\begin{cases}
x_j^{k+1}=\arg\min\{\mathcal{L}(x_1^{k+1},\dots,x_{j-1}^{k+1}, x_j, x_{j+1}^k,\dots,  x_s^k, y^k)+\frac{\beta}{2\alpha_j}\|x_j-x_j^k\|_{2}^2: x_j\in\mathbb{R}^{n_j}\}, j\in\mathbb{N}_s,\\
y^{k+1}=y^k+\beta(\sum_{i=1}^s{A_ix_i^{k+1}-b}).\\
\end{cases}
\end{equation}
On the other hand, by the definition of proximity operator \eqref{def:prox}, the proposed algorithm \eqref{eq:algPADMM} can be equivalently rewritten as
\begin{equation}\label{eq:algSpecificPADMM}
\begin{cases}
x_j^{k+1}=\arg\min\{\mathcal{L}(x_1^{k+1},\dots,x_{j-1}^{k+1}, x_j,\tilde x_{j+1}^k,\dots, \tilde x_s^k, y^k)+\frac{\beta}{2\alpha_j}\|x_j-x_j^k\|_{2}^2: x_j\in\mathbb{R}^{n_j}\}, j\in\mathbb{N}_s,\\
y^{k+1}=y^k+\beta(\sum_{i=1}^s{A_ix_i^{k+1}-b}),\\
\tilde x_j^{k+1}=2x_j^{k+1}-x_j^k, j\in\mathbb{N}_s.
\end{cases}
\end{equation}
We can observe that our proposed algorithm \eqref{eq:algPADMM} reduces to the PADMM if we set $\tilde x_j^{k+1}=x_j^{k+1}$  for $j\in\mathbb{N}_s$ in \eqref{eq:algSpecificPADMM}. As shown in \cite{Chen-He-Ye-Yuan:MathProg2014},  convergence of ADMM directly applied to problem \eqref{model} with $s\ge 3$ is not guaranteed. Also, it was shown in \cite{Li-Sun-Toh:proxADMM2015} that PADMM may not converge unless extra assumptions on $f_i$ for $i\in\mathbb{N}_s$  are added. However,  algorithm \eqref{eq:algSpecificPADMM} is ensured to converge without  extra assumptions on $f_i$ for $i\in\mathbb{N}_s$.
We next establish the convergence result of  algorithm \eqref{eq:algPADMM}.
\begin{proposition}\label{prop:padmm}
Let $M_0, M_1, M_2$ be defined as \eqref{eq:MatrixMPADMMm0}, \eqref{eq:MatrixMPADMMm1} and \eqref{eq:MatrixMPADMMm2}. Let $\widetilde M_2:=\frac{1}{\beta}M_2$. If
\begin{equation}\label{cond:padmm}
0<\alpha_i<\frac{1}{2\|\widetilde M_2\|_2},~\mathrm{for~} i\in\mathbb{N}_s ~~\mathrm{and}~~ \beta>0,
\end{equation}
then the set $\{M_0, M_1, M_2\}$ satisfies Condition-M.
\end{proposition}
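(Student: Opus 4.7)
The plan is to verify the three hypotheses of Condition-M in turn, exploiting the fact that the off-diagonal blocks of $M_0$ and $M_2$ are set up precisely so that their sum has cancellations that turn $H$ into a block-diagonal matrix.

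First, I would check hypothesis (i), namely $M_0=M_1+M_2$, by direct block-by-block inspection of \eqref{eq:MatrixMPADMMm0}--\eqref{eq:MatrixMPADMMm2}. The diagonal blocks match trivially, and for each $(i,j)$ with $1\le i<j\le s$ one has $-2\beta A_i^\top A_j + \beta A_i^\top A_j = -\beta A_i^\top A_j$; all strictly lower-triangular blocks and the blocks touching the last row/column are zero in both $M_1+M_2$ and $M_0$.

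Second, I would compute $H:=M_0+M_2$. Here the key observation is that in the upper-triangular region $(1\le i<j\le s)$ the block of $M_0$ is $-\beta A_i^\top A_j$ while the block of $M_2$ is $+\beta A_i^\top A_j$, so they cancel exactly. Every other off-diagonal block is already zero. Therefore
\[
H=\mathrm{diag}\Big(\tfrac{\beta}{\alpha_1}I_{n_1},\dots,\tfrac{\beta}{\alpha_s}I_{n_s},\tfrac{1}{\beta}I_m\Big),
\]
which is symmetric with strictly positive diagonal, proving hypothesis (ii).

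Third, for hypothesis (iii) I would exploit the same block-diagonal structure of $H$. Since $H^{-1/2}$ is diagonal with blocks $\sqrt{\alpha_i/\beta}\,I_{n_i}$ (for $i\le s$) and $\sqrt{\beta}\,I_m$, a direct computation gives that $H^{-1/2}M_2H^{-1/2}$ has $(i,j)$-block equal to $\sqrt{\alpha_i\alpha_j}\,A_i^\top A_j$ for $1\le i<j\le s$ and zero elsewhere; equivalently,
\[
H^{-1/2}M_2H^{-1/2}=\begin{bmatrix} Q\,U\,Q & 0 \\ 0 & 0 \end{bmatrix},\qquad U:=\tfrac{1}{\beta}\,M_2\big|_{\text{top-left }n\times n},
\]
where $Q=\mathrm{diag}(\sqrt{\alpha_1}I_{n_1},\dots,\sqrt{\alpha_s}I_{n_s})$ is exactly the scaling matrix used in the previous subsection. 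Using submultiplicativity,
\[
\|H^{-1/2}M_2H^{-1/2}\|_2=\|Q\,U\,Q\|_2\le \|Q\|_2^2\,\|U\|_2=\Big(\max_i\alpha_i\Big)\|\widetilde M_2\|_2,
\]
since $\|\widetilde M_2\|_2=\|U\|_2$. The hypothesis $\alpha_i<1/(2\|\widetilde M_2\|_2)$ then forces this quantity to be strictly less than $1/2$.

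The only mildly delicate step is the third one: I need to be careful that the $\beta$-scaling in $H^{-1/2}$ cancels the $\beta$-factor in $M_2$ so that the resulting bound only involves the $\alpha_i$'s and the normalized quantity $\|\widetilde M_2\|_2$. Once that bookkeeping is done, the submultiplicative bound $\|QUQ\|_2\le\|Q\|_2^2\|U\|_2$ yields hypothesis (iii) immediately, completing the verification of Condition-M.
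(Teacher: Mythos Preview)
Your proposal is correct and follows essentially the same route as the paper: verify $M_0=M_1+M_2$ by inspection, observe that the off-diagonal blocks of $M_0$ and $M_2$ cancel so that $H$ is the positive diagonal matrix $\mathrm{diag}(\tfrac{\beta}{\alpha_1}I,\dots,\tfrac{\beta}{\alpha_s}I,\tfrac{1}{\beta}I)$, and then bound $\|H^{-1/2}M_2H^{-1/2}\|_2$ via submultiplicativity by $(\max_i\alpha_i)\|\widetilde M_2\|_2<\tfrac{1}{2}$. Your explicit factorization $H^{-1/2}M_2H^{-1/2}=\begin{bmatrix}QUQ&0\\0&0\end{bmatrix}$ is just a slightly more careful way of writing what the paper encodes with its (singular) auxiliary matrix $\widetilde H$, so the arguments are the same in substance.
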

\begin{proof}
Clearly, we see that $M_0=M_1+M_2$, that is, Item (i) of Condition-M holds. Define $H:=M_0+M_2$. Then $H=\mathrm{diag}(\frac{\beta}{\alpha_1}\mathbf{1}_{n_1},\dots,\frac{\beta}{\alpha_s}\mathbf{1}_{n_s}, 1/\beta \mathbf{1}_m)$ is diagonal and symmetric. Item (ii) of Condition-M is trivial due to $\alpha_i>0$ for $i\in\mathbb{N}_s$ and $\beta>0$.
We then prove the validity of Item (iii) of Condition-M. Since the last $m$ columns and  rows of $M_2$ are all zeros, we have that
$$
\|H^{-1/2}M_2H^{-1/2}\|_2=\|\widetilde H^{-1/2} M_2 \widetilde H^{-1/2}\|_2,
$$
where $\widetilde H:=\mathrm{diag}(\frac{\beta}{\alpha_1}\mathbf{1}_{n_1},\dots,\frac{\beta}{\alpha_s}\mathbf{1}_{n_s}, \mathbf{0}_m)$.  By using  hypothesis \eqref{cond:padmm}, we find that
$$
\|\widetilde H^{-1/2} M_2\widetilde H^{-1/2}\|_2\le \max\{\alpha_i: i\in\mathbb{N}_s\}\|\widetilde M_2\|_2<\frac{1}{2},
$$
which leads to Item (iii) of Condition-M.
\end{proof}

The convergence results of  algorithm \eqref{eq:algPADMM} is presented below.
\begin{theorem}\label{thm:SpecificPADMMconverge}
  Let $f_i \in \Gamma_0(\mathbb{R}^{n_i})$, $A_i$ an $m \times n_i$ matrix, 
   $\alpha_i>0$ for $i\in\mathbb{N}_s$ and $\beta>0$.  Let the sequence $\{(x^k, y^k):k\in\mathbb{N}\}$ be generated from the algorithm \eqref{eq:algSpecificPADMM} for any $(x^0, y^0), (x^1, y^1) \in\mathbb{R}^n\times\mathbb{R}^m$. Let $M_2$ be defined as \eqref{eq:MatrixMPADMMm2} and $\widetilde M_2 = \frac{1}{\beta} M_2$. If the condition \eqref{cond:padmm} is satisfied, then the sequence $\{x^k:k\in\mathbb{N}\}$ converges to a solution of problem \eqref{model}.
 \end{theorem}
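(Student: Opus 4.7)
The plan is to realize Theorem \ref{thm:SpecificPADMMconverge} as a direct instantiation of Theorem \ref{thm:itr_con} for the specific choice of matrices in \eqref{eq:MatrixMPADMMm0}--\eqref{eq:MatrixMPADMMm2}. To do so I need to verify three things: (a) algorithm \eqref{eq:algSpecificPADMM} (equivalently \eqref{eq:algPADMM}) is literally the iterative scheme \eqref{eq:general-ite} with these matrices; (b) the operator $T_\mathcal{M}$ defined by \eqref{def:tm} is well-defined for this $\mathcal{M}$; and (c) $\mathcal{M}=\{M_0,M_1,M_2\}$ satisfies Condition-M under the hypothesis \eqref{cond:padmm}. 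Item (c) is exactly the content of Proposition \ref{prop:padmm}, so the real work is in (a) and (b).

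For (a), I would write out the $y$-block and each $x_j$-block of the identity $v^{k+1}=\mathcal{T}((E-R^{-1}M_0)v^{k+1}+R^{-1}M_1 v^k+R^{-1}M_2 v^{k-1})$ componentwise. Because $M_2$ has a zero $y$-row and $M_0,M_1$ contribute $\tfrac{1}{\beta}I$ in the $y$-diagonal block, and since $\mathrm{prox}_{\beta\iota_C^*}(y)=y-\beta b$, the $y$-block reduces at once to $y^{k+1}=y^k+\beta(\sum_i A_ix_i^{k+1}-b)$. For the $x_j$-block the argument of the proximity operator $\mathrm{prox}_{(\alpha_j/\beta)f_j}$ contains the term $-\tfrac{\alpha_j}{\beta}A_j^\top y^{k+1}$ together with the off-diagonal blocks $\alpha_jA_j^\top A_l x_l^{k+1}$ for $l>j$ coming from $E-R^{-1}M_0$, plus the contributions $x_j^k-2\alpha_j\sum_{l>j}A_j^\top A_l x_l^k$ from $R^{-1}M_1v^k$ and $\alpha_j\sum_{l>j}A_j^\top A_l x_l^{k-1}$ from $R^{-1}M_2 v^{k-1}$. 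Substituting the $y^{k+1}$-expression produces $-\alpha_jA_j^\top A_l x_l^{k+1}$ for every $l$, and the forward pieces ($l>j$) cancel the corresponding contributions from $E-R^{-1}M_0$; the surviving terms assemble exactly into the update in \eqref{eq:algPADMM}. This cancellation is the one delicate algebraic check and is what makes the specific choice of $M_0$ so convenient.

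The same cancellation simultaneously delivers (b): given $(v^k,v^{k-1})$, the update for $x_j^{k+1}$ depends only on $x_i^{k+1}$ for $i<j$ and on past iterates, so $x_1^{k+1},\dots,x_s^{k+1}$ can be computed in order via the single-valued proximity operator, after which $y^{k+1}$ is fixed in closed form. Hence the implicit equation \eqref{def:tm} admits a unique solution for every input pair and $T_\mathcal{M}$ is well-defined and coincides with the map implemented by algorithm \eqref{eq:algSpecificPADMM}.

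With (a)--(c) in hand, the conclusion is immediate: Proposition \ref{prop:padmm} gives Condition-M under \eqref{cond:padmm}, and Theorem \ref{thm:itr_con} then yields that $\{v^k\}=\{(x^k,y^k)\}$ converges to a fixed-point of $\mathcal{T}\circ E$, so by Theorem \ref{PropSolu} the sequence $\{x^k\}$ converges to a solution of problem \eqref{model}. The main obstacle is the componentwise bookkeeping in step (a), since every other ingredient has already been packaged into the abstract convergence theorem.
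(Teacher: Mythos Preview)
Your overall strategy matches the paper's exactly: reduce to Theorem \ref{thm:itr_con} by checking (a) the algorithm is an instance of \eqref{eq:general-ite} with the matrices \eqref{eq:MatrixMPADMMm0}--\eqref{eq:MatrixMPADMMm2}, (b) $T_\mathcal{M}$ is well-defined, and (c) Condition-M holds via Proposition \ref{prop:padmm}. Your bookkeeping in (a) is correct, and (c) is literally Proposition \ref{prop:padmm}.

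There is, however, a genuine gap in your argument for (b). You claim that after the substitution for $y^{k+1}$ ``the update for $x_j^{k+1}$ depends only on $x_i^{k+1}$ for $i<j$ and on past iterates,'' and conclude well-definedness from single-valuedness of the proximity operator. This is not true: in \eqref{eq:algPADMM} the sum in the argument of $\mathrm{prox}_{(\alpha_j/\beta)f_j}$ runs over $i=1,\dots,j$, so the term $-\alpha_j A_j^\top A_j x_j^{k+1}$ is present and the $j$-th block update is still an \emph{implicit} equation in $x_j^{k+1}$ (this is precisely why the paper calls algorithm \eqref{eq:algPADMM} ``implicit''). Single-valuedness of the proximity operator alone does not guarantee that a fixed-point equation of the form $x=\mathrm{prox}_{g}(c-Lx)$ has a unique solution. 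The paper closes this gap by observing that, for each $j$, the implicit block equation is the optimality condition of
\[
\min_{x_j}\Big\{ f_j(x_j)+\tfrac{\beta}{2}\big\|\textstyle\sum_{i<j}A_i w_i + A_j x_j + \sum_{i>j}A_i(2u_i-v_i)-b\big\|_2^2 + \langle u_y, A_j x_j\rangle + \tfrac{\beta}{2\alpha_j}\|x_j-u_j\|_2^2\Big\},
\]
which is strongly convex in $x_j$ because of the proximal term $\tfrac{\beta}{2\alpha_j}\|x_j-u_j\|_2^2$; hence each $w_j$ is uniquely determined once $w_1,\dots,w_{j-1}$ are. Replace your sentence in (b) with this strong-convexity argument and the proof goes through.
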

 \begin{proof}
 By Proposition \ref{prop:padmm} and Theorem \ref{thm:itr_con}, it suffices to prove $T_\mathcal{M}$ is well-defined when $M_0, M_1, M_2$ are defined by \eqref{eq:MatrixMPADMMm0}, \eqref{eq:MatrixMPADMMm1} and \eqref{eq:MatrixMPADMMm2}. In this case, if $w=T_\mathcal{M}(u, v)$, where $w:=(w_1, \dots, w_s, w_y), u:=(u_1, \dots, u_s, u_y), v:=(v_1,\dots, v_s, v_y) \in\mathbb{R}^{n_1}\times\dots\times\mathbb{R}^{n_s}\times \mathbb{R}^m $, then $w_y=u_y+\sum_{j=1}^s A_j w_j$ and each $w_i$ for $i\in\mathbb{N}_s$ can be calculated by
$$w_i=\arg\min\{f_i(x_i)+\frac{\beta}{2}\|\sum_{j=1}^{i-1}A_jw_j+A_ix_i+\sum_{j=i+1}^{s}A_j(2u_j-v_j)-b\|_2^2+\langle u_y, A_i x_i\rangle +\frac{\beta}{2\alpha_i}\|x_i-u_i\|_{2}^2: x_i\in\mathbb{R}^{n_i}\}.$$
Since the objective function of the above optimization problem is strongly convex, $T_\mathcal{M}$ is well-defined.
\end{proof}

 To end  this subsection, we point out that compared with algorithms \eqref{eq:al1} and \eqref{eq:al2}, algorithm  \eqref{eq:algPADMM} takes advantage of the separable structure of variable $x$ and applies the block-wise Gauss-Seidel technique  to blocks $x_1,\dots, x_s, y$. We also note that solving the subproblems involved in \eqref{eq:algPADMM} may require inner iterations. In practice, it will  affect the computational efficiency of the algorithm \eqref{eq:algPADMM}. In the next subsection, we develop  an explicit two-step algorithm. As long as the proximity operators of $f_i$ for $i\in\mathbb{N}_s$ have  closed form solutions, the algorithm can be implemented efficiently.

 \subsection{Convergent Explicit Two-step Proximity Algorithms}
 In this subsection, we propose a class of explicit algorithms, which apply the block-wise Gauss-Seidel technique to blocks $x_1, \dots, x_s, y$.

 We begin with specifying the set of matrices $\mathcal{M}$. We set
 \begin{equation}\label{eq:MatrixMlADMMm0}
 M_0:=\begin{bmatrix}
 \frac{\beta}{\alpha_1}I-\beta A_1^\top A_1&-\beta A_1^\top A_2&-\beta A_1^\top A_3&\dots&-\beta A_1^\top A_s&0\\
 0&\frac{\beta}{\alpha_2}I-\beta A_2^\top A_2&-\beta A_2^\top A_3&\dots&-\beta A_2^\top A_s&0\\
 0&0&\frac{\beta}{\alpha_3}I-\beta A_3^\top A_3&\dots&-\beta A_3^\top A_s&0\\
 \dots&\dots&\dots&\ddots&\dots&\dots\\
 0&0&0&\dots&\frac{\beta}{\alpha_s}I-\beta A_s^\top A_s&0\\
 0&0&0&\dots&0&\frac{1}{\beta}I
 \end{bmatrix},
 \end{equation}
   \begin{equation}\label{eq:MatrixMlADMMm1}
 M_1:=\begin{bmatrix}
 \frac{\beta}{\alpha_1}I-\beta A_1^\top A_1&-2\beta A_1^\top A_2&-2\beta A_1^\top A_3&\dots&-2\beta A_1^\top A_s&0\\
 0&\frac{\beta}{\alpha_2}I-\beta A_2^\top A_2&-2\beta A_2^\top A_3&\dots&-2\beta A_2^\top A_s&0\\
 0&0&\frac{\beta}{\alpha_3}I-\beta A_3^\top A_3&\dots&-2\beta A_3^\top A_s&0\\
 \dots&\dots&\dots&\ddots&\dots&\dots\\
 0&0&0&\dots&\frac{\beta}{\alpha_s}I-\beta A_s^\top A_s&0\\
 0&0&0&\dots&0&\frac{1}{\beta}I
 \end{bmatrix},
 \end{equation}
and let $M_2$ be defined as in \eqref{eq:MatrixMPADMMm2}.
We can obtain an implicit algorithm by directly substituting  \eqref{eq:MatrixMlADMMm0}, \eqref{eq:MatrixMlADMMm1} and \eqref{eq:MatrixMPADMMm2} into the iterative scheme \eqref{eq:general-ite}. As the same as the algorithm \eqref{eq:algPADMM}, it implies
$$
 y^{k+1}=y^k+\beta(\sum_{i=1}^sA_ix_i^{k+1}-b).
$$
As in subsection  \ref{subsec:implicitAlg}, we  replace $y^{k+1}$ by $y^k+\beta(\sum_{i=1}^sA_ix_i^{k+1}-b)$ when we update  $x_i^{k+1}$ for $i\in\mathbb{N}_s$. This leads to the following explicit algorithm
 \begin{equation}\label{eq:algLADMM}
 (\mathrm{2SFPPA})\begin{cases}
   x_j^{k+1} = &\mathrm{prox}_{\frac{\alpha_j}{\beta}f_j}(x_j^k-\alpha_j A_j^\top(\sum_{i=1}^{j-1}A_ix_i^{k+1}+A_jx_j^k + \\
    & \sum_{i=j+1}^s A_i(2x_i^k-x_i^{k-1})-b)-\frac{\alpha_j}{\beta}A_j^\top y^k), ~~ j\in\mathbb{N}_s,\\
    y^{k+1} =& y^k+\beta(\sum_{i=1}^s A_ix_i^{k+1}-b).
  \end{cases}
 \end{equation}

We point out here the relationship between the proposed algorithm \eqref{eq:algLADMM} and the LADMM. To this end, we first review the exact extension of LADMM to problem \eqref{model}. For $j\in\mathbb{N}_s$, let $J_j:\mathbb{R}^{n_1}\times\dots\times\mathbb{R}^{n_s}\times\mathbb{R}^m\rightarrow\mathbb{R}^{n_j}$ defined, at $(x_1, x_2,\dots, x_s, y)$, as $J_j(x_1,\dots, x_s, y):=\beta  (A_j^\top\sum_{i=1}^s A_ix_i-b)+A_j^\top y$.
The direct extension of LADMM to the multi-block problem is as follows
  \begin{equation}\label{LADMM}
 \begin{cases}
  x_j^{k+1}= & \arg\min\{f_j(x_j)+\langle  J_j(x_1^{k+1}, \dots, x_{j-1}^{k+1}, x_j^k,  \dots,  x_s^k, y^k), x_j-x_j^k\rangle \\
  & +\frac{\beta}{2\alpha_j}\|x_j-x_j^k\|_{2}^2: x_j\in\mathbb{R}^{n_j}\},~~j\in\mathbb{N}_s,\\
    y^{k+1}= & y^k+\beta(\sum_{i=1}^sA_ix_i^{k+1}-b).\\
 \end{cases}
 \end{equation}
 Using the above notations and the definition of proximity operators \eqref{def:prox}, the algorithm \eqref{eq:algLADMM} can be rewritten in its equivalent form
  \begin{equation}\label{MLADMM}
 \begin{cases}

  x_j^{k+1}= & \arg\min\{f_j(x_j)+\langle  J_j(x_1^{k+1}, \dots, x_{j-1}^{k+1}, x_j^k, \tilde x_{j+1}^k, \dots, \tilde x_s^k, y^k), x_j-x_j^k\rangle \\
  & +\frac{\beta}{2\alpha_j}\|x_j-x_j^k\|_{2}^2: x_j\in\mathbb{R}^{n_j}\},j\in\mathbb{N}_s,\\
    y^{k+1}= &y^k+\beta(\sum_{i=1}^sA_ix_i^{k+1}-b),\\
  \tilde x^{k+1}= & 2x^{k+1}-x^k.
 \end{cases}
 \end{equation}
Obviously,  our proposed algorithm \eqref{eq:algLADMM} reduces to the LADMM if we set $\tilde x^{k+1}=x^{k+1}$ in \eqref{MLADMM}.
As mentioned in \cite{He-Yuan:LADMGBS:2013}, the direct extension of LADMM to the multi-block problem \eqref{model} is not necessarily convergent. Nevertheless, the convergence of the proposed algorithm \eqref{eq:algLADMM} is guaranteed. Next we present the convergence results of the algorithm \eqref{eq:algLADMM}.
\begin{proposition}\label{prop:ladmm}
Let $M_0$, $M_1$ and $M_2$ be defined as in \eqref{eq:MatrixMlADMMm0}, \eqref{eq:MatrixMlADMMm1} and \eqref{eq:MatrixMPADMMm2}. Let $\widetilde M_2:=\frac{1}{\beta}M_2. $ If
\begin{equation}\label{cond:ladmm}
0<\alpha_i<\frac{1}{\|A_i\|^2_2+2\|\widetilde M_2\|_2},~\mathrm{for~} i\in\mathbb{N}_s ~~\mathrm{and}~~ \beta>0,
\end{equation}
then the set $\{M_0, M_1, M_2\}$ satisfies Condition-M.
\end{proposition}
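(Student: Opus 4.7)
The plan is to verify the three items of Condition-M (Definition \ref{def}) for the matrices given in \eqref{eq:MatrixMlADMMm0}, \eqref{eq:MatrixMlADMMm1}, and \eqref{eq:MatrixMPADMMm2}. The overall strategy closely parallels the proof of Proposition \ref{prop:padmm}; the only extra care needed is in handling the diagonal blocks $\frac{\beta}{\alpha_i}I - \beta A_i^\top A_i$, which are no longer scalar multiples of the identity.

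Item (i) follows by direct inspection of the block entries: the diagonals of $M_0$ and $M_1$ agree, and for $i<j$ the upper-triangular $(i,j)$ block of $M_1$ is $-2\beta A_i^\top A_j$, which equals the corresponding block $-\beta A_i^\top A_j$ of $M_0$ plus $+\beta A_i^\top A_j$ of $M_2$. For Item (ii), I would observe that in $H := M_0 + M_2$ the upper-triangular off-diagonals of $M_0$ exactly cancel those of $M_2$, leaving $H$ block diagonal:
$$H = \mathrm{diag}\Bigl(\tfrac{\beta}{\alpha_1}I - \beta A_1^\top A_1,\ldots,\tfrac{\beta}{\alpha_s}I - \beta A_s^\top A_s,\tfrac{1}{\beta}I_m\Bigr).$$
Hypothesis \eqref{cond:ladmm} yields $\frac{1}{\alpha_i} > \|A_i\|_2^2 + 2\|\widetilde M_2\|_2 \geq \|A_i\|_2^2$, so every diagonal block is positive definite and hence $H \in \mathbb{S}_+^{n+m}$.

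The main work is Item (iii). The key observation is that the last $m$ rows and columns of $M_2$ vanish, so if $D := \mathrm{diag}(\frac{\beta}{\alpha_i}I - \beta A_i^\top A_i : i \in \mathbb{N}_s)$ denotes the top-left block of $H$ and $\widehat{M}_2$ the top-left block of $M_2$, then submultiplicativity of the operator norm gives
$$\|H^{-1/2} M_2 H^{-1/2}\|_2 = \|D^{-1/2}\widehat{M}_2 D^{-1/2}\|_2 \leq \|D^{-1}\|_2\,\|\widehat{M}_2\|_2.$$
Here $\|\widehat{M}_2\|_2 = \|M_2\|_2 = \beta\|\widetilde M_2\|_2$, while the block-diagonal form of $D$ gives $\|D^{-1}\|_2 = 1 / \min_i(\frac{\beta}{\alpha_i} - \beta\|A_i\|_2^2)$. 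Hypothesis \eqref{cond:ladmm} rearranges to $\frac{\beta}{\alpha_i} - \beta\|A_i\|_2^2 > 2\beta\|\widetilde M_2\|_2$ strictly for each $i$, so the product above is strictly less than $1/2$ (the degenerate case $\widetilde M_2 = 0$ being trivial), proving Item (iii).

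The only real obstacle is choosing the right bounding strategy in Item (iii): because $D$ is no longer a scalar multiple of the identity, the clean calculation used in Proposition \ref{prop:padmm} must be replaced by the uniform lower bound on $\lambda_{\min}(D)$ together with the submultiplicative estimate above. The hypothesis \eqref{cond:ladmm} is calibrated precisely so that this bound closes, simultaneously ensuring positive definiteness of $H$ and the $\tfrac12$-contraction bound on $H^{-1/2}M_2H^{-1/2}$.
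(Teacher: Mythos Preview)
Your proposal is correct and follows essentially the same route as the paper: verify $M_0=M_1+M_2$ by block inspection, observe that $H=M_0+M_2$ is block diagonal with positive-definite blocks under \eqref{cond:ladmm}, and then bound $\|H^{-1/2}M_2H^{-1/2}\|_2$ by restricting to the top-left $n\times n$ block and using $\|D^{-1/2}\widehat M_2 D^{-1/2}\|_2\le \|D^{-1}\|_2\|\widehat M_2\|_2$, which is exactly the paper's estimate $\max_i\bigl(\tfrac{1}{\alpha_i}-\|A_i\|_2^2\bigr)^{-1}\|\widetilde M_2\|_2<\tfrac12$. One small slip: in your Item~(i) check you wrote that the $(i,j)$ block of $M_1$ equals that of $M_0$ \emph{plus} that of $M_2$, but $-\beta A_i^\top A_j+\beta A_i^\top A_j=0\neq -2\beta A_i^\top A_j$; the correct verification is $M_0=M_1+M_2$, i.e.\ $-\beta A_i^\top A_j=-2\beta A_i^\top A_j+\beta A_i^\top A_j$.
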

\begin{proof}
 It is clear that Item (i) of Condition-M is satisfied. Define $H:=M_0+M_2$. Then $H=\mathrm{diag}(\frac{\beta}{\alpha_1}I-\beta A_1^\top A_1,\dots,\frac{\beta}{\alpha_s}I-\beta A_s^\top A_s, 1/\beta I)$ is symmetric. In light of \eqref{cond:ladmm}, we have that $H\in\mathbb{S}_+^{n+m}$. Hence, Item (ii) of Condition-M holds.
We next show Item (iii) of Condition-M. Similar to the proof of Proposition \ref{prop:padmm},
$$
\|H^{-1/2}M_2H^{-1/2}\|_2=\|\widetilde H^{-1/2} M_2 \widetilde H^{-1/2}\|_2,
$$
where  $\widetilde H:=\mathrm{diag}(\frac{\beta}{\alpha_1}I-\beta A_1^\top A_1,\dots,\frac{\beta}{\alpha_s}I-\beta A_s^\top A_s, 0)$. Hypothesis \eqref{cond:ladmm} leads to
$$
\|\widetilde H^{-1/2} M_2\widetilde H^{-1/2}\|_2\le \max\left\{\frac{1}{\frac{1}{\alpha_i}-\|A_i\|^2_2}: i\in\mathbb{N}_s\right\}\|\widetilde M_2\|_2<\frac{1}{2}.
$$
This completes the proof.
\end{proof}

The following theorem regards the convergence of  algorithm \eqref{eq:algLADMM}.
 \begin{theorem}
  Let $f_i \in \Gamma_0(\mathbb{R}^{n_i})$, $A_i$ an $m \times n_i$ matrix, 
  $\alpha_i>0$ for $i\in\mathbb{N}_s$ and $\beta>0$.  Let the sequence $\{(x^k, y^k):k\in\mathbb{N}\}$ be generated from algorithm \eqref{eq:algLADMM} for any $(x^0, y^0), (x^1, y^1) \in\mathbb{R}^n\times\mathbb{R}^m$. Let $M_2$ be defined as \eqref{eq:MatrixMPADMMm2} and $\widetilde M_2 = \frac{1}{\beta} M_2$. If  condition \eqref{cond:ladmm} is satisfied, then the sequence $\{x^k:k\in\mathbb{N}\}$ converges to a solution of problem \eqref{model}.
 \end{theorem}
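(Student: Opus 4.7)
The plan is to deduce the theorem from Theorem~\ref{thm:itr_con} applied to the choice of matrices $\{M_0,M_1,M_2\}$ given by \eqref{eq:MatrixMlADMMm0}, \eqref{eq:MatrixMlADMMm1} and \eqref{eq:MatrixMPADMMm2}. Proposition~\ref{prop:ladmm} already certifies, under hypothesis \eqref{cond:ladmm}, that this $\mathcal{M}$ satisfies Condition-M, so only two ingredients remain: first, the operator $T_\mathcal{M}$ must be well-defined; second, the sequence generated by \eqref{eq:algLADMM} must coincide with $v^{k+1}=T_\mathcal{M}(v^k,v^{k-1})$.

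For well-definedness I would imitate the argument used for Theorem~\ref{thm:SpecificPADMMconverge}. Given $u,v\in\mathbb{R}^{n+m}$, I would write the defining equation $w=\mathcal{T}((E-R^{-1}M_0)w+R^{-1}M_1u+R^{-1}M_2v)$ componentwise. Using the explicit form $\mathrm{prox}_{\beta\iota_C^*}(z)=z-\beta b$ and the fact that the last block row of $M_2$ vanishes while the last block rows of $M_0$ and $M_1$ equal $\frac{1}{\beta}I$ in the $y$-block, the $y$-component collapses to $w_y=u_y+\beta(\sum_i A_i w_i - b)$. Substituting this expression for $w_y$ back into each $x_j$-equation, the $A_j^\top A_i$ contributions for $i\ge j$ cancel out, which is precisely the algebraic manipulation that produced \eqref{eq:algLADMM} from \eqref{eq:general-ite}. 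What remains in the $j$-th argument of the proximity operator depends only on $w_1,\dots,w_{j-1}$ and on the data $u,v$. Hence $w_1,w_2,\dots,w_s,w_y$ can be constructed uniquely in sequence; this simultaneously shows that $T_\mathcal{M}$ is well-defined and that $T_\mathcal{M}(v^k,v^{k-1})$ is exactly one sweep of the recursion in \eqref{eq:algLADMM}.

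Once that correspondence is in place, Theorem~\ref{thm:itr_con} applies verbatim: the sequence $\{v^k\}$ converges to a fixed-point of $\mathcal{T}\circ E$, and Theorem~\ref{PropSolu} then guarantees that the limit of $\{x^k\}$ solves \eqref{model}. The main obstacle I anticipate is the bookkeeping in the substitution step, namely tracking the block entries of $R^{-1}M_0$, $R^{-1}M_1$, and $R^{-1}M_2$ carefully enough to verify that, after replacing $w_y$ by $u_y+\beta(\sum_i A_i w_i-b)$, every $w_i$ with $i\ge j$ drops out of the $j$-th proximity-operator argument. Beyond that calculation everything is routine, since the existence-uniqueness of each $w_j$ as the value of a proximity operator at a completely determined point is immediate.
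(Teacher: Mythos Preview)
Your proposal is correct and follows essentially the same approach as the paper: invoke Proposition~\ref{prop:ladmm} for Condition-M, verify that $T_\mathcal{M}$ is well-defined, and conclude via Theorem~\ref{thm:itr_con}. The paper's own proof is terser, simply observing that the explicit form of \eqref{eq:algLADMM} makes well-definedness obvious, whereas you spell out the substitution $w_y=u_y+\beta(\sum_i A_i w_i-b)$ and the resulting cancellation of the $A_j^\top A_i$ terms for $i\ge j$; this extra detail is exactly the derivation the paper carried out informally when obtaining \eqref{eq:algLADMM} from \eqref{eq:general-ite}.
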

 \begin{proof}
By Theorem \ref{thm:itr_con} and Proposition \ref{prop:ladmm}, we only need to prove $T_\mathcal{M}$ is well-defined. In this case, from   algorithm \eqref{eq:algLADMM}, it is obvious that $T_\mathcal{M}$ can be computed explicitly. Therefore $T_\mathcal{M}$ is well-defined.
 \end{proof}

To close this subsection, we remark  that when the proximity operators of $f_i$ for $i\in\mathbb{N}_s$ have closed form solutions, the two-step algorithm \eqref{eq:algLADMM} may be more efficient than the two-step algorithm \eqref{eq:algPADMM}. This is because  the two-step algorithm \eqref{eq:algPADMM} may require inner iterations to solve the subproblems involved, while each step of  algorithm \eqref{eq:algLADMM} can be implemented efficiently by making use of the closed form.

\subsection{Variants of algorithms \eqref{eq:algPADMM} and \eqref{eq:algLADMM}}
There is a wide variety of the choices of $\{M_0, M_1, M_2\}$ satisfying condition-M, including those of  algorithms \eqref{eq:algPADMM} and \eqref{eq:algLADMM}. In this subsection, we present  other choices of  $\{M_0, M_1, M_2\}$ satisfying condition-M. With these choices the two step iterative scheme \eqref{eq:general-ite} reduces to a class of new algorithms, which can be viewed as variants of  algorithms \eqref{eq:algPADMM} and \eqref{eq:algLADMM}.

{\bf{Modifications of diagonal blocks:}}  The diagonal blocks of $M_1$ and $M_2$ can be chosen in  other ways. We only present two examples in the following. For instance, the diagonal entries of $M_1$ in \eqref{eq:MatrixMPADMMm1} can be chosen as $((\theta+1)\frac{\beta}{\alpha_1}{\bf{1}}_{n_1},\dots, (\theta+1)\frac{\beta}{\alpha_s}{\bf{1}}_{n_s}, \frac{1}{\beta}{\bf{1}}_{m})$  with $\theta\in[0, 1)$ and correspondingly, the  diagonal entries of $M_2$ in \eqref{eq:MatrixMPADMMm2} should be $(-\theta \frac{\beta}{\alpha_1}{\bf{1}}_{n_1},\dots, -\theta \frac{\beta}{\alpha_s}{\bf{1}}_{n_s}, {\bf{0}}_m)$. With such a choice of $\{M_0, M_1, M_2\}$, iterative scheme \eqref{eq:general-ite} reduces to a variant of algorithm \eqref{eq:algPADMM}
    $$
    \begin{cases} x_j^{k+1}=&\mathrm{prox}_{\frac{\alpha_j}{\beta}f_j}(x_j^k+\theta(x_j^k-x_j^{k-1})-\alpha_jA_j^\top(\sum_{i=1}^j A_ix_i^{k+1}\\
    &+\sum_{i=j+1}^sA_i(2x_i^k-x_i^{k-1})-b)- \frac{\alpha_j}{\beta}A_j^\top y^k), j\in\mathbb{N}_s,\\
    y^{k+1}=&y^k+\beta(\sum_{i=1}^s A_ix_i^{k+1}-b).
    \end{cases}
    $$  As a second example, the diagonal blocks of $M_1$ in \eqref{eq:MatrixMlADMMm1} can be chosen as $(\frac{\beta}{\alpha_1}I-2\beta A_1^\top A_1,\dots, \frac{\beta}{\alpha_s}I-2\beta A_s^\top A_s, \frac{1}{\beta}I)$. Accordingly,  the  diagonal blocks of $M_2$ in \eqref{eq:MatrixMPADMMm2} should be $(\beta A_1^\top A_1,\dots, \beta A_s^\top A_s, {\bf{0}})$ to make $M_0=M_1+M_2$. These matrices leads to a variant of algorithm \eqref{eq:algLADMM}
 $$
    \begin{cases} x_j^{k+1} = & \mathrm{prox}_{\frac{\alpha_j}{\beta}f_j}(x_j^k-\alpha_j A_j^\top(\sum_{i=1}^{j-1}A_ix_i^{k+1}\\
    &+\sum_{i=j}^s A_i(2x_i^k-x_i^{k-1})-b)-\frac{\alpha_j}{\beta}A_j^\top y^k), ~~ j\in\mathbb{N}_s,\\
    y^{k+1}=&y^k+\beta(\sum_{i=1}^s A_ix_i^{k+1}-b).
    \end{cases}
    $$

{\bf Modifications of nondiagonal blocks:}
  We change the $(i, j)$-th block of  $M_0$ (defined by \eqref{eq:MatrixMPADMMm0} or \eqref{eq:MatrixMlADMMm0}) for $i>j$  from $0$ to $\theta\beta A_i^\top A_j$ and keep other blocks of $M_0$  unchanged. In order to make $M_0+M_2$ symmetric, the matrix $M_2$ should be chosen as $\theta+1$ multiplying the original matrix $M_2$ defined in \eqref{eq:MatrixMPADMMm2}. Accordingly,   the matrix $M_1$ can be determined by $M_1=M_0-M_2$. Then we can derive the following two algorithms from iterative scheme \eqref{eq:general-ite}
  \begin{equation}
 \begin{cases}
   x_j^{k+1}=&\mathrm{prox}_{\frac{\alpha_j}{\beta}f_j}(x_j^k-\alpha_jA_j^\top(\sum_{i=1}^{j-1} A_i(x_i^{k+1}+\theta(x_i^{k+1}-x_i^k))+A_jx_j^{k+1}\\
   &+\sum_{i=j+1}^sA_i((2+\theta)x_i^k-(\theta+1)x_i^{k-1})-b)- \frac{\alpha_j}{\beta}A_j^\top y^k), ~~j\in\mathbb{N}_s,\\
    y^{k+1}=&y^k+\beta(\sum_{i=1}^sA_ix_i^{k+1}-b),
 \end{cases}
 \end{equation}
 \begin{equation}
 \begin{cases}
   x_j^{k+1} = &\mathrm{prox}_{\frac{\alpha_j}{\beta}f_j}(x_j^k-\alpha_j A_j^\top(\sum_{i=1}^{j-1}A_i(x_i^{k+1}+\theta(x_i^{k+1}-x_i^k))+A_jx_j^k \\
    & +\sum_{i=j+1}^s A_i((2+\theta)x_i^k-\theta x_i^{k-1})-b)-\frac{\alpha_j}{\beta}A_j^\top y^k), ~~ j\in\mathbb{N}_s,\\
    y^{k+1} =& y^k+\beta(\sum_{i=1}^s A_ix_i^{k+1}-b).
  \end{cases}
 \end{equation}

{\bf Hybrids of both algorithms:} Both algorithms \eqref{eq:algPADMM} and \eqref{eq:algLADMM} share the same matrix $M_2$. Matrices $M_0$ for algorithms \eqref{eq:algPADMM} and  \eqref{eq:algLADMM} are almost the same except the diagonal blocks.  Let $S_1\subseteq\mathbb{N}_s$ and $S_2=\mathbb{N}_s\backslash S_1$. Suppose the subproblems of \eqref{eq:algSpecificPADMM} for $x_i^{k+1}$, $i\in S_1$  can be solved efficiently.  We also assume inner iterations are required to solve the subproblems of \eqref{eq:algSpecificPADMM} for $x_i^{k+1}$, $i\in S_2$.
    We set the $i$-th diagonal block of $M_0$ to be $\frac{\beta}{\alpha_i}I$ for $i\in S_1$ and to be $\frac{\beta}{\alpha_i}I-\beta A_i^\top A_i$ for $i\in S_2$. The nondiagonal blocks of $M_0$ are chosen to be the same as in \eqref{eq:MatrixMPADMMm0} and \eqref{eq:MatrixMlADMMm0}. We further choose the matrix $M_2$ as in \eqref{eq:MatrixMPADMMm2}. Accordingly, the matrix $M_1$ is determined by $M_1=M_0-M_2$. Then we obtain the following hybrid algorithm
     $$
     \begin{cases}
     x_j^{k+1}=&\mathrm{prox}_{\frac{\alpha_j}{\beta}f_j}(x_j^k-\alpha_jA_j^\top(\sum_{i=1}^j A_ix_i^{k+1}\\
     &+\sum_{i=j+1}^sA_i(2x_i^k-x_i^{k-1})-b)- \frac{\alpha_j}{\beta}A_j^\top y^k),\mathrm{~if~}  j\in S_1,\\
     x_j^{k+1} = &\mathrm{prox}_{\frac{\alpha_j}{\beta}f_j}(x_j^k-\alpha_j A_j^\top(\sum_{i=1}^{j-1}A_ix_i^{k+1}+A_jx_j^k + \\
    & \sum_{i=j+1}^s A_i(2x_i^k-x_i^{k-1})-b)-\frac{\alpha_j}{\beta}A_j^\top y^k), \mathrm{~if~} j\in S_2,\\
    y^{k+1}=&y^k+\beta(\sum_{i=1}^s A_ix_i^{k+1}-b).
     \end{cases}
     $$

We point out here that  convergence of the above algorithms is guaranteed. One can obtain the convergence results by verifying that the corresponding set of matrices $\mathcal{M}:=\{M_0, M_1, M_2\}$ satisfies Condition-M and $T_\mathcal{M}$ is well-defined. We omit the details here since the proofs are  similar to those of algorithms \eqref{eq:algPADMM} and \eqref{eq:algLADMM}.

\section{Numerical Experiments}\label{sec:exp}
In this section, we demonstrate the efficiency of the proposed two-step fixed-point proximity algorithms by applying 2SFPPA to the sparse Magnetic Resonance Imaging (MRI) reconstruction problem \cite{Lustig-Donoho-Pauly:MRM:07}. We shall compare the performances of the proposed 2SFPPA with those of other LADMM-type algorithms.
\subsection{Sparse MRI problem}
For convenience of exposition, we assume that an image considered has a size of $d_1
\times d_2$. The image is treated as a vector in $\mathbb{R}^{d_1 d_2}$ in
such a way its $(i,j)$-th pixel  corresponds to the
$(i+(j-1)d_2)$-th component of the vector in $\mathbb{R}^{d_1 d_2}$.
We
set $d:= d_1 d_2$. Let $K\in\mathbb{R}^{p \times d}$ $(p < d)$ be a partial Fourier transform matrix and $b\in\mathbb{R}^{p}$ represent the observed data. Then the general form of the sparse MRI reconstruction model can be written as
$$
\min\{F(u):u\in\mathbb{R}^d, Ku = b\},
$$
where $F(\cdot):\mathbb{R}^d\rightarrow \mathbb{R}$ is a sparse-promoting function. It is well-known that superior image reconstruction can be obtained when $F(\cdot)$ is chosen to be the hybrid of total variation and the $\ell_1$-norm of the Haar wavelet transform. Denote the Haar wavelet transform matrix by $W\in\mathbb{R}^{q \times d}$ and define the $q\times q$ diagonal matrix $\Lambda:=\mathrm{diag}(\lambda_1,\dots,\lambda_{q})$ with $\lambda_i\geq 0, i\in\mathbb{N}_q$.
We turn to considering the following specific sparse MRI problem
\begin{equation}\label{prob:smri1}
\min\{\mu\|u\|_{\mathrm{TV}} + \|\Lambda Wu\|_1:u\in\mathbb{R}^d, Ku = b\},
\end{equation}
where $\mu>0$ trades the total variation with sparsity of the wavelet coefficients $Wu$.

In order to apply the proposed algorithms, we need to reformulate  problem \eqref{prob:smri1}. First, we rewrite $\|\cdot\|_{\mathrm{TV}}$ to a function composed with a linear mapping. To this end,  we recall the $r \times r$ difference matrix $D_r$ by
\begin{equation}\label{eq:d}
D_r:=\begin{bmatrix}
 1&&&-1 \\
 -1 &1 \\
 \hfill &\ddots &\ddots \\
 \hfill &\hfill &{-1} &{1} \\
\end{bmatrix}.
\end{equation}
Through the matrix Kronecker product $\otimes$, we define the
$2d \times d$ matrix $ B$ by
\begin{equation}\label{def:B}
B:=\begin{bmatrix}
I_{d_2}\otimes D_{d_1}\\
D_{d_2} \otimes I_{d_1}
\end{bmatrix}.
\end{equation}
Moreover, we define function $\psi:\mathbb{R}^{2d}\rightarrow \mathbb{R}$ at
$y \in \mathbb{R}^{2d}$ as
\begin{equation}\label{def:psi}
\psi(y):=\sum_{i=1}^{d} \left\|[y_i, y_{d+i}]^\top\right\|_2.
\end{equation}
With the definition of  matrix $ B$ \eqref{def:B} and the
convex function $\psi$ \eqref{def:psi}, the (isotropic) total variation of an image $x$ can be represented by
\begin{equation}\label{eq:TV}
\|x\|_{\mathrm{TV}}=\psi(Bx).
\end{equation}
Moreover, we define $\varphi:\mathbb{R}^q\rightarrow \mathbb{R}$ at $y\in\mathbb{R}$ as $\varphi(y):=\|\Lambda y\|_1$.
Then with help of the formula \eqref{eq:TV}, function $\varphi$ and the indicator function $\iota_{\{b\}}$,  problem \eqref{prob:smri1} can be equivalently reformulated as
\begin{equation}\label{prob:smri2}
\min\{\mu\psi(Bu) + \varphi(Wu) + \iota_{\{b\}}(Ku) :u\in\mathbb{R}^d\}.
\end{equation}
Recall the dual problem of \eqref{prob:smri2} has a form of
\begin{equation}\label{prob:dualmri1}
\min\{(\mu\psi)^*(x_1) + \varphi^*(x_2) + \iota_{\{b\}}^*(x_3) : B^\top x_1 + W^\top x_2+ K^\top x_3 = 0, x_1\in\mathbb{R}^{2d}, x_2\in\mathbb{R}^q, x_3\in\mathbb{R}^p\}.
\end{equation}
By the definition of the Fenchel conjugate function, one can easily check that the Fenchel conjugate functions in \eqref{prob:dualmri1} have the form
$$
(\mu\psi)^* = \iota_{S_1}, ~~ \varphi^* = \iota_{S_2}, ~\mathrm{and}~ \iota_{\{b\}}^*(\cdot) = \langle b,\cdot \rangle,
$$
where the sets $S_1\subseteq \mathbb{R}^{2d}$ and $S_2\subseteq \mathbb{R}^{q}$ are defined as
$$
S_1: = \{\|[y_i,y_{d+i}]\|_2\leq \mu, \forall i\in\mathbb{N}_d: y\in\mathbb{R}^{2d}\}
$$
and
$$
S_2:= \{|y_j|\leq \lambda_j, \forall j\in\mathbb{N}_q: y\in\mathbb{R}^q\}.
$$
Therefore, we obtain the following minimization problem
\begin{equation}\label{prob:dualmri2}
\min\{\iota_{S_1}(x_1)+\iota_{S_2}(x_2)+\langle b,x_3 \rangle: B^\top x_1 + W^\top x_2+ K^\top x_3 = 0, x_1\in\mathbb{R}^{2d}, x_2\in\mathbb{R}^q, x_3\in\mathbb{R}^p\}.
\end{equation}
Obviously, problem \eqref{prob:dualmri2} is a special case of the multi-block problem \eqref{model} with the block number $s=3$. Thus we can directly apply 2SFPPA to solving  problem \eqref{prob:dualmri2}. In particular, all the proximity operators of the convex functions involved in \eqref{prob:dualmri2} have closed forms. More precisely, the proximity operators
$
\mathrm{prox}_{\frac{\alpha_1}{\beta}\iota_{S_1}}
$
and
$
\mathrm{prox}_{\frac{\alpha_2}{\beta}\iota_{S_2}}
$
are exactly the projection operator onto the sets $S_1$ and $S_2$ respectively. The proximity operator $\mathrm{prox}_{\frac{\alpha_3}{\beta}\langle \cdot, b\rangle}$ is just the shift operator. We describe the 2SFPPA for the sparse MRI model in Algorithm \ref{alg:smri}.
\begin{algorithm}\caption{(2SFPPA for the sparse MRI)}\label{alg:smri}
 \begin{algorithmic}[1]
   \State Given: observed data $b$ in $\mathbb{R}^{p}$; $\mu>0$, $\Lambda\geq 0$,
   $\alpha_1$,$\alpha_2$,$\alpha_3>0$ and $\beta>0$
    \State Initialization: $x^0_1=K^\top b$, $x^0_{2}=x^{-1}_2=0$, $x^0_{3}=x^{-1}_3=0$, $y_0 = 0$.
   \Repeat
   \State Step 1: $x_1^{k+1} \longleftarrow \mathrm{Proj}_{S_1}(x_1^k - \alpha_1 B (B^\top x^k_1 + W^\top (2x_2^k -x_2^{k-1})+ K^\top (2x^k_3 - x_3^{k-1})+ \frac{1}{\beta}y^k))$, \\
   \State Step 2: $x_2^{k+1} \longleftarrow \mathrm{Proj}_{S_2}(x_2^k - \alpha_2 W (B^\top x^{k+1}_1 + W^\top x^k_2+ K^\top (2x^k_3 - x_3^{k-1})+ \frac{1}{\beta}y^k))$, \\
    \State Step 3: $x_3^{k+1} \longleftarrow x_3^k - \alpha_3 K (B^\top x^{k+1}_1 + W^\top x^{k+1}_2+ K^\top x^k_3+ \frac{1}{\beta}y^k)-\frac{\alpha_3}{\beta}b$, \\
    \State Step 4: $y^{k+1}\longleftarrow y^k + \beta (B^\top x^{k+1}_1 + W^\top x^{k+1}_2 + K^\top x^{k+1}_3).$

   \Until{``convergence''}
   \State Write the output of $-y^{k}$ from the above loop as
   $u^{\infty}$.
 \end{algorithmic}
\end{algorithm}

\subsection{Numerical results}
In this subsection, we shall compare numerical results of the proposed 2SFPPA  with those of the Jacobi-type LADMM (JADMM) \eqref{eq:al2}, the LADMM and LADMM  with Gaussian back substitution (LADMMG) for the sparse MRI problem. All the experiments are conducted in Matlab 7.6 (R2008a) installed on a laptop with Intel Core i5 CPU at 2.5GHz, 8G RAM running Windows 7.

In the experiment, we select the $256\times 256$ ``Shepp-Logan'' phantom as the test image, see Fig.1 (a). The observed data $b$ is obtained by sampling the discrete Fourier transform of the phantom along 17 pseudo-radial lines, as shown in Fig.1 (b). The Haar wavelet transform $W\in\mathbb{R}^{p\times d}$ is chosen to be non-decimated and thus we have that $p = 4d$. We assume that the upper $d\times d$ sub-matrix of $W$ is formed by the low-pass filter while the remaining $3d\times d$ sub-matrix is formed by the high-pass filters. Accordingly, we set the diagonal entries of the diagonal matrix $\Lambda$ as follows
$$
\lambda_i =
\begin{cases}
0, ~~i\in\mathbb{N}_d,\\
\frac{1}{2}, ~~i\in\mathbb{N}_p\backslash \mathbb{N}_d.
\end{cases}
$$
We further take the regularization parameters $\mu = 3$ throughout the test.
We measure the computational efficiency of the compared algorithms by two criteria. One criterion is the relative error between values of the objective function at each iteration and the optimal function value of problem \eqref{prob:smri2}. We remark that the indicator function $\iota_{\{b\}}$ is involved in the objective function and the iterates $u^k= -y^k$ may not always satisfy $K u^k = b$. Therefore, for fair numerical comparisons we compute the following relative error
$$
\epsilon^k_1 := (F(u^k)+\tau \|Ku^k-b\|_2  - F^*)/F^*,
$$
where $\tau>0$ is a penalty parameter and $F^*$ denotes the optimal function value. In practice, we set $\tau = 1000$ and run the LADMM for 5000 iterations to obtain an approximation of $F^*$. The other one is that the relative error between two successive iterates
$$
\epsilon^k_2 := \frac{\|y^{k}-y^{k-1}\|_2}{\|y^{k}\|_2}.
$$
The quality of the reconstructed image is evaluated in terms of the peak signal-to-noise ratio (PSNR) defined by
$$
\mathrm{PSNR}=10\mathrm{log}_{10}\frac{255\sqrt{d}}{\|u^\infty-u^\star\|_2}(\mathrm{dB}),
$$
where $u^\star$ is the original image vector and $u^\infty$ is the recovered image vector.
\begin{figure}\label{fig:image}
\begin{center}
\begin{tabular}{cc}
\scalebox{0.3}{\includegraphics{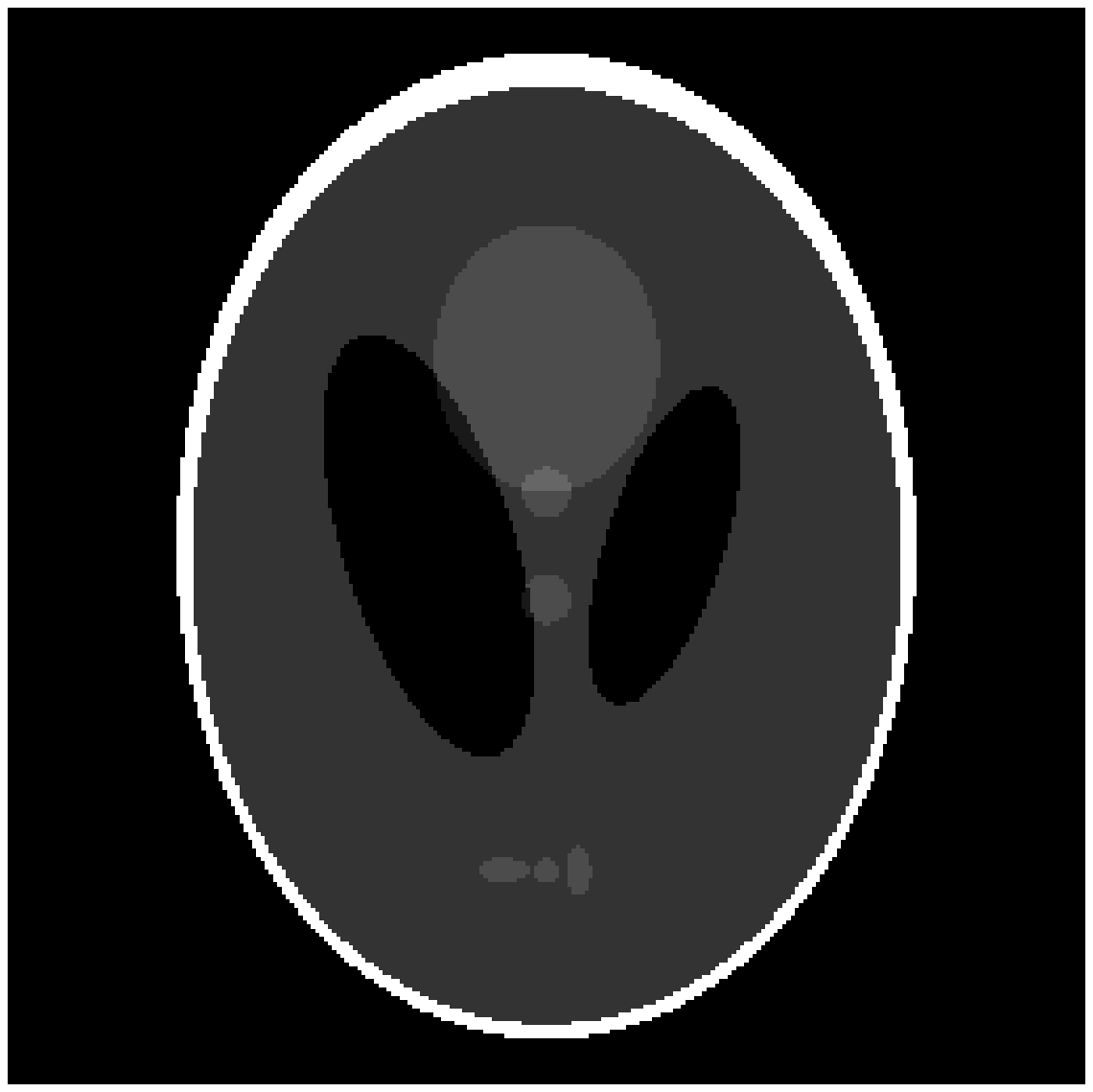}}
&\scalebox{0.3}{\includegraphics{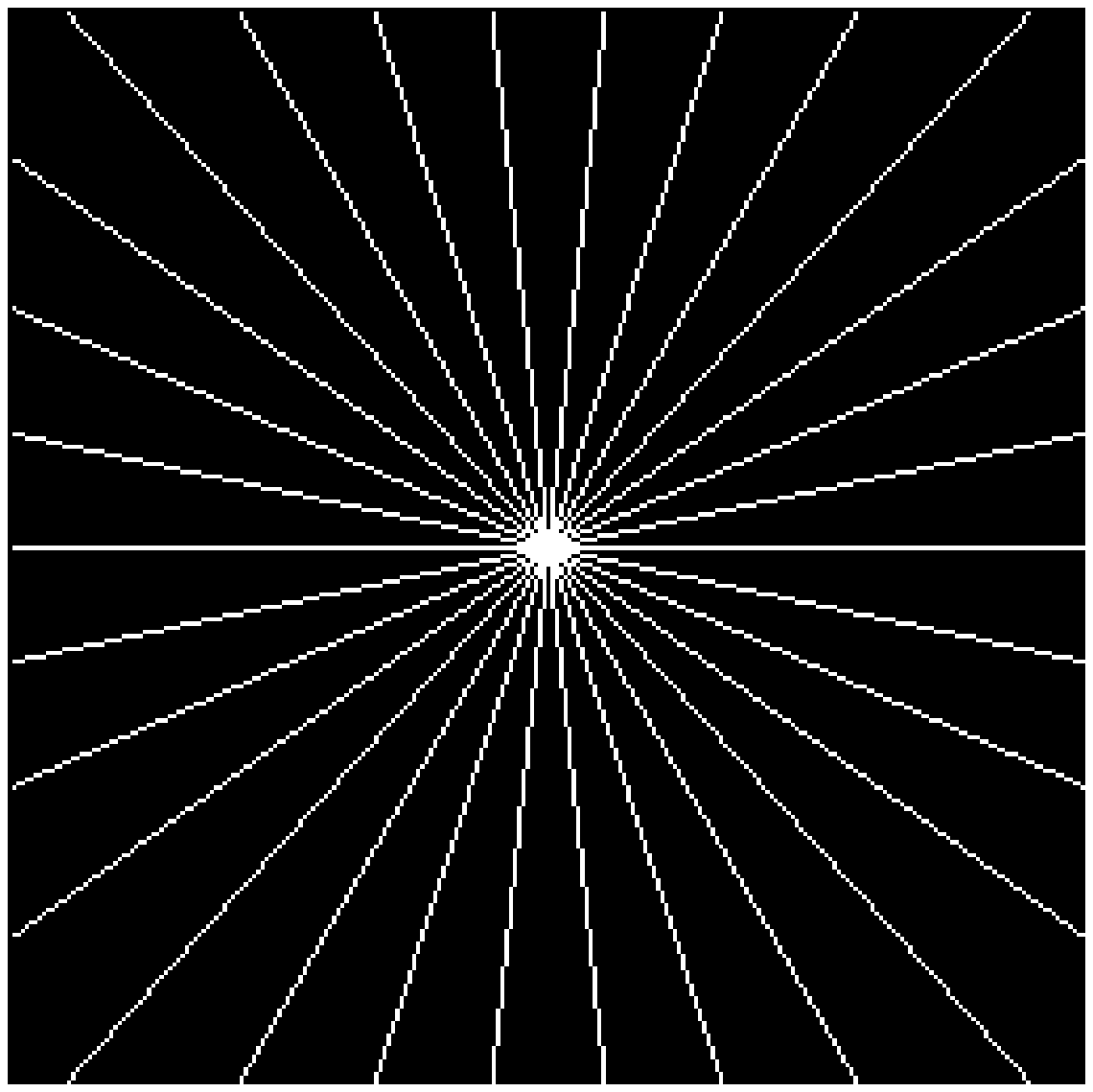}}\\
(a) & (b) \\
\end{tabular}
\end{center}
\caption{\footnotesize{(a) Shepp-Logan phantom (b) Sample pattern.}}
\end{figure}

For the JLADMM, we set
\begin{equation}\label{eq:parameter1}
 \alpha_1 = \alpha_2 = \alpha_3 =\frac{1}{8} ~\mathrm{and}~ \beta=1.
\end{equation}
For the LADMMG, LADMM, and 2SFPPA, we set
 \begin{equation}\label{eq:parameter2}
 \alpha_1 = \frac{1}{8},
 \alpha_2 = \frac{0.999999}{\|W\|^2_2},
 \alpha_3 = \frac{0.999999}{\|K\|^2_2}, ~\mathrm{and}~ \beta = 1.
 \end{equation}
Besides, as suggested in \cite{He-Yuan:LADMGBS:2013}, the parameter $\theta$ involved in LADMMG is set to be $1$. With such choice of  parameters, all the four algorithms achieve their best performance in terms of the convergence speed.

Table 1 and Table 2
 summarize the numbers of iteration, PSNR values and CPU times when the three algorithms achieve the given accuracy. We observe that the proposed 2SFPPA performs slightly better than LADMM and much better than JLADMM and LADMMG in terms of computational time. The LADMMG costs much more CPU time than LADMM and 2SFPPA due to the Gaussian back substitution step which ensures  convergence of the algorithm. The evolution of the objective function values and PSNR values with respect to the CPU time and the number of iterations are shown in Fig.2. The sequence of function values from 2SFPPA decreases faster to the minimum value than that from JLADMM and LADMMG. Similarly, the sequence of PSNR values from 2SFPPA grows faster to the maximum value than that from JLADMM and LADMMG. Overall, we conclude that 2SFPPA performs as efficiently as LADMM and much better than JLADMM and LADMMG.

\begin{table}\label{tab:sum1}
{\caption {\footnotesize Performance comparison for the sparse MRI. For a given error tolerance $\epsilon$, the first column in the bracket represents the first iteration number $k$ such that $\epsilon^k_1<\epsilon$, the second column and the third column in the bracket show the corresponding PSNR and CPU time.}}
\begin{center}

 \begin{tabular}{cccc}
 \hline
 &$\epsilon=10^{-4}$&$\epsilon=10^{-5}$&$\epsilon=10^{-6}$\\ \hline
  $\mathrm{JLADMM}$&$(3410,~~65.68,~~178.79)$&$(-,~~-,~~-)$&$(-,~~-,~~-)$\\
 $\mathrm{LADMMG}$&$(1237,~~63.71,~~119.59)$&$(3667,~~69.43,~~363.04)$&$(-,~~-,~~-)$\\
 $\mathrm{LADMM}$&$(1140,~~63.63,~~64.46)$&$(3452,~~69.29,~~201.41)$&$(4778,~~70.88,~~279.21)$\\
 $\mathrm{2SFPPA}$&$(1026,~~63.61,~~60.12)$&$(3175,~~69.18,~~184.92)$&$(4455,~~70.78,~~259.99)$\\

 \hline
 \end{tabular}

 \end{center}

\end{table}

\begin{table}
{\caption {\footnotesize Performance comparison for the sparse MRI. For a given error tolerance $\epsilon$, the first column in the bracket represents the first iteration number $k$ such that $\epsilon^k_2<\epsilon$, the second column and the third column in the bracket show the corresponding PSNR and CPU time.}}\label{tab:sum2}
\begin{center}

 \begin{tabular}{cccc}
 \hline
 &$\epsilon=5\times 10^{-5}$&$\epsilon=5\times 10^{-6}$&$\epsilon=5\times 10^{-7}$\\ \hline
 $\mathrm{JLADMM}$&$(646,~~55.15,~~33.15)$&$(1388,~~59.97,~~71.79)$&$(3416,~~65.69,~~179.15)$\\
 $\mathrm{LADMMG}$&$(473,~~57.68,~~46.57)$&$(928,~~62.05,~~89.70)$&$(2451,~~67.37,~~242.11)$\\
 $\mathrm{LADMM}$&$(468,~~58.18,~~26.76)$&$(920,~~62.44,~~51.77)$&$(2366,~~67.42,~~137.29)$\\
 $\mathrm{2SFPPA}$&$(438,~~58.26,~~26.22)$&$(909,~~62.98,~~53.50)$&$(2305,~~67.66,~~133.94)$\\

 \hline
 \end{tabular}
 \end{center}
\end{table}

\begin{figure}\label{fig:vsTN}
\begin{center}
\begin{tabular}{cc}
\scalebox{0.4}{\includegraphics{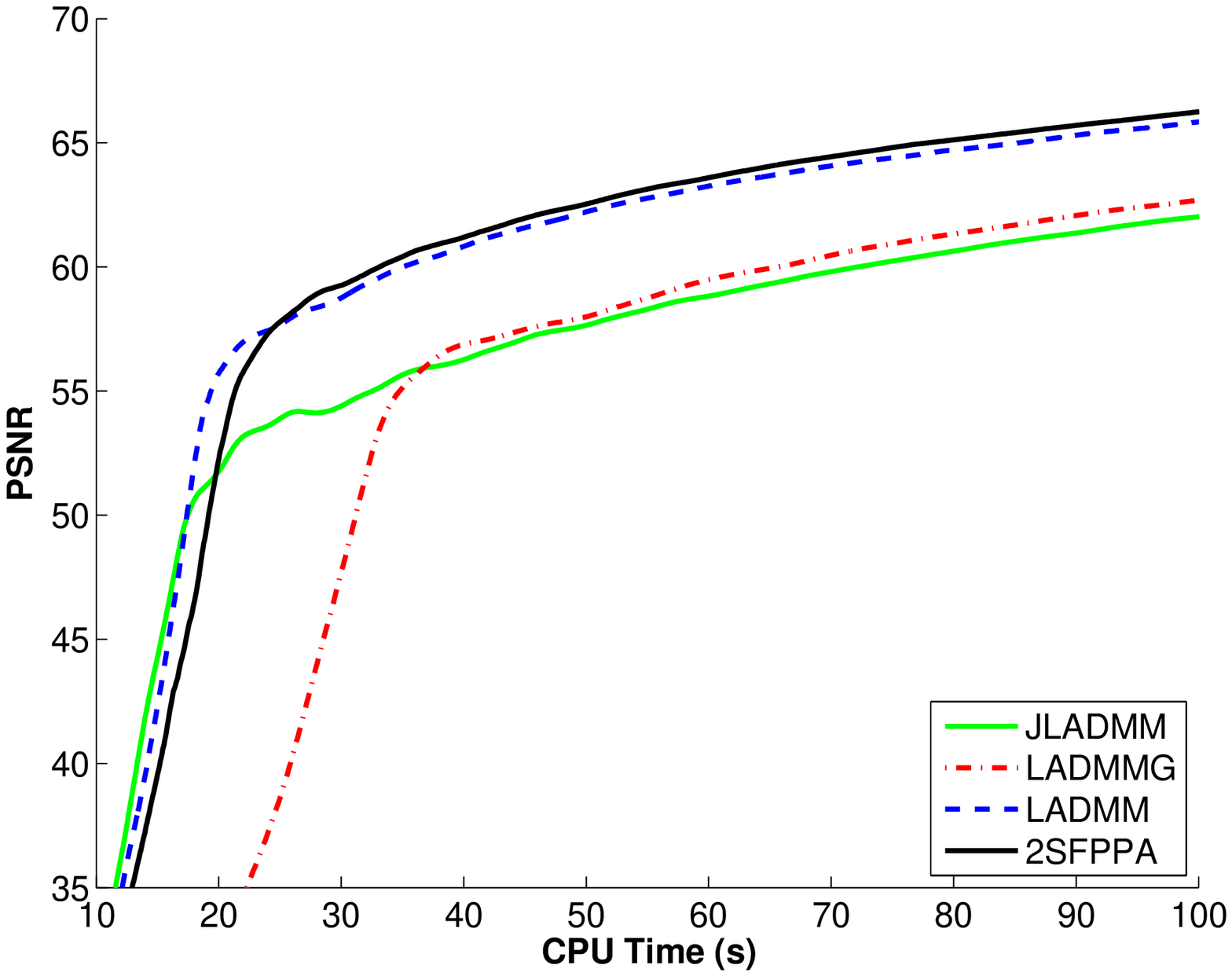}}
&\scalebox{0.4}{\includegraphics{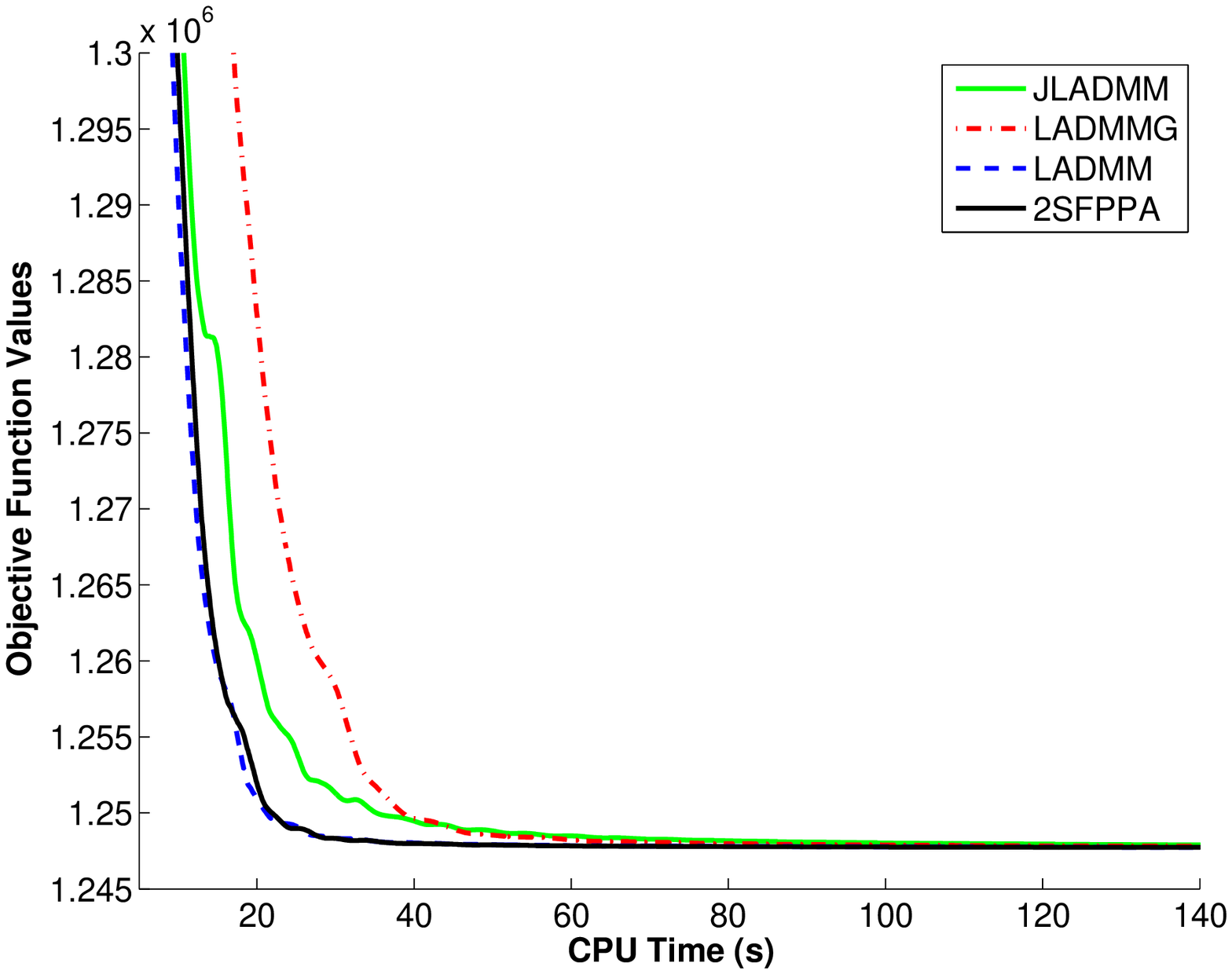}}\\
(a) & (b) \\
\scalebox{0.4}{\includegraphics{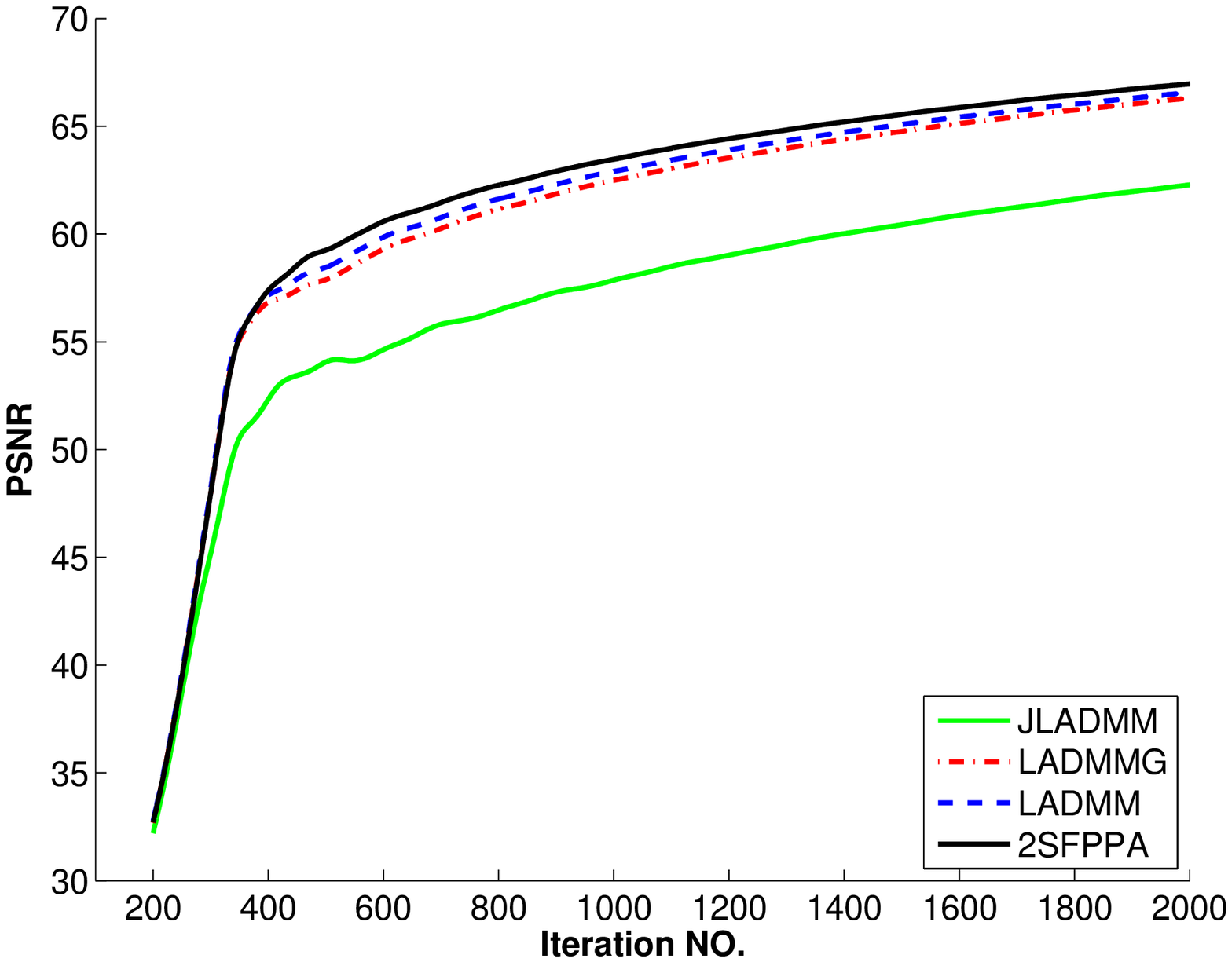}}
&\scalebox{0.4}{\includegraphics{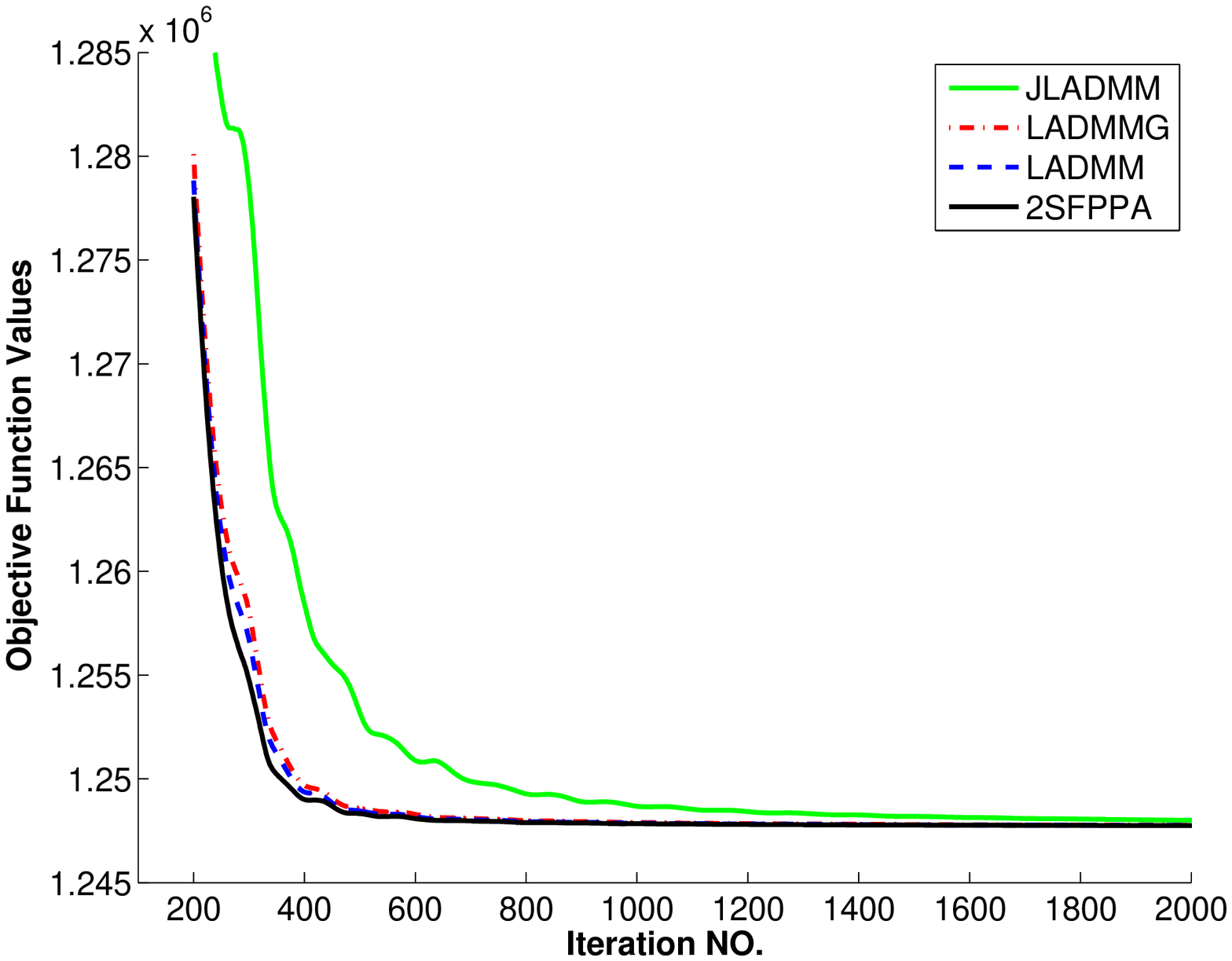}}\\
(c) & (d)\\
\end{tabular}
\caption{\footnotesize{(a) PSNR versus computational time, (b) objective function value versus computational time,  (c)  PSNR versus number of iterations, (d) objective function value versus number of iterations. }}
\end{center}
\end{figure}
\section{Conclusions}\label{sec:conclusion}
In this paper, we study the multi-block separable convex problem, which minimizes the sum of several convex functions with linear constraints. We develop a two-step fixed-point iterative scheme for solving the problem. We prove that the iterative scheme is convergent and has the convergence rate of $O(\frac{1}{k})$ in the ergodic sense and the sense of the partial primal-dual gap, where $k$ denotes the iteration number.  Based on the iterative scheme, we propose a class of convergent two-step algorithms for the multi-block separable convex problem.  Convergence analysis for the specific algorithms can be carried out by verifying conditions on the matrices used to construct the algorithms. In the numerical experiments, we applied our two-step algorithms to the sparse MRI problems. Numerical results show that our proposed algorithms perform as efficiently as LADMM and outperform the JLADMM and LADMMG.

\vspace{10mm}

\bibliographystyle{siam}


\begin{thebibliography}{10}

\bibitem{Attouch-Combettes:ForBack2010}
{\sc Hedy Attouch, Luis~M. Briceno-Arias, and Patrick~L. Combettes}, {\em A
  parallel splitting method for coupled monotone inclusions}, SIAM Journal on
  Control and Optimization, 48 (2010), pp.~3246--3270.

\bibitem{Bauschke-Combettes:11}
{\sc Heinz~H. Bauschke and Patrick~L. Combettes}, {\em Convex Analysis and
  Monotone Operator Theory in Hilbert Spaces}, AMS Books in Mathematics,
  Springer, New York, 2011.


\bibitem{Cai-Chan-Shen:ACHA:08}
{\sc Jianfeng Cai, Raymond Chan, and Zuowei Shen}, {\em A framelet-based image
  inpainting algorithm}, Applied and Computational Harmonic Analysis, 24
  (2007), pp.~131--149.

\bibitem{Cai-Osher-Shen:SAIMIS:09}
{\sc Jianfeng Cai, Stanely Osher, and Zuowei Shen}, {\em Linearized {B}regman
  iteration for frame based image deblurring}, SIAM Journal on Imaging
  Sciences, 2 (2009), pp.~226--252.

\bibitem{Chambolle-Pock:JMIV11}
{\sc Antonin Chambolle and Thomas Pock}, {\em A first-order primal-dual
  algorithm for convex problems with applications to imaging}, Journal of
  Mathematical Imaging and Vision, 40 (2011), pp.~120--145.

\bibitem{chan:sjsc:03}
{\sc Raymond Chan, Tony Chan, Lixin Shen, and Zuowei Shen}, {\em Wavelet
  algorithms for high-resolution image reconstruction}, SIAM Journal on
  Scientific Computing, 24 (2003), pp.~1408--1432.

\bibitem{chan:acha:04}
{\sc Raymond Chan, Sherman~D. Riemenschneider, Lixin Shen, and Zuowei Shen},
  {\em Tight frame: The efficient way for high-resolution image
  reconstruction}, Applied and Computational Harmonic Analysis, 17 (2004),
  pp.~91--115.

\bibitem{Chen-He-Ye-Yuan:MathProg2014}
{\sc Caihua Chen, Bingsheng He, Yinyu Ye, and Xiaoming Yuan}, {\em The direct
  extension of admm for multi-block convex minimization problems is not
  necessarily convergent}, Mathematical Programming,  (2014), pp.~1--23.

\bibitem{Vapnik:SVM}
{\sc Corinna Cortes and Vladimir Vapnik}, {\em Support-vector networks},
  Machine learning, 20 (1995), pp.~273--297.

\bibitem{Davis-Yin:3operator}
{\sc Damek Davis and Wotao Yin}, {\em A three-operator splitting scheme and its
  optimization applications}.
\newblock UCLA CAM Report 15-13.

\bibitem{deng-yin:parallelADMM}
{\sc Wei Deng, Ming-Jun Lai, Zhimin Peng, and Wotao Yin}, {\em Parallel
  multi-block admm with o(1/k) convergence}, UCLA CAM 13-64,  (2014).

\bibitem{Esser-Zhang-Chan:1order-PD-09}
{\sc Ernie Esser, Xiaoqun Zhang, and Tony~F. Chan}, {\em A general framework
  for a class of first order primal-dual algorithms for convex optimization in
  imaging science}, Siam Journal on Imaging Sciences, 3 (2010), pp.~1015--1046.

\bibitem{Gabay1976Mercier}
{\sc Daniel Gabay and Bertrand Mercier}, {\em A dual algorithm for the solution
  of nonlinear variational problems via finite element approximation. comput.
  math. appl. 2(1), 17-40}, Computers and Mathematics with Applications, 2
  (1976), pp.~17--40.

\bibitem{Goldstein-Osher:SAIMIS:09}
{\sc Tom Goldstein and Stanley Osher}, {\em The split {B}regman method for
  $\ell^1$ regularization problems}, SIAM Journal on Imaging Sciences, 2
  (2009), pp.~323--343.

\bibitem{He-Yuan:admmGBS:SIAMOPT}
{\sc Bingsheng He, Min Tao, and Xiaoming Yuan}, {\em Alternating direction
  method with gaussian back substitution for separable convex programming},
  SIAM Journal on Optimization, 22 (2012), pp.~313--340.

\bibitem{He-Yuan:LADMGBS:2013}
{\sc Bingsheng He and Xiaoming Yuan}, {\em Linearized alternating direction
  method of multipliers with gaussian back substitution for seperable convex
  programming}, Numerical Algebra Control and Optimization, 22 (2013),
  pp.~247--260.

\bibitem{He-yuan:admmstrongly}
\leavevmode\vrule height 2pt depth -1.6pt width 23pt, {\em The direct extension
  of admm for three-block separable convex minimization models is convergent
  when one function is strongly convex}, manuscript,  (2014).

\bibitem{Li-Sun-Toh:proxADMM2015}
{\sc Min Li, Defeng Sun, and Kim-Chuan Toh}, {\em A convergent 3-block
  semi-proximal admm for convex minimization problems with one strongly convex
  block}, Asia-Pacific Journal of Operational Research,  (2015).

\bibitem{Li-Shen-Xu-Zhang:AiCM:14}
{\sc Qia Li, Lixin Shen, Yuesheng Xu, and Na~Zhang}, {\em Multi-step fixed-point proximity
  algorithms for solving a class of convex optimization problems arising from
  image processing}, Advances in Computational Mathematics, 41 (2015),
  pp.~387--422.

\bibitem{Li-Shen-Yang:ACHA:2012}
{\sc Qia Li, Lixin Shen, and Lihua Yang}, {\em Split-bregman iteration for
  framelet based image inpainting}, Applied and Computational Harmonic
  Analysis, 32 (2012), pp.~145--154.

\bibitem{Li-Zhang:ACHA:2015}
{\sc Qia Li and Na~Zhang}, {\em Fast proximity-gradient algorithms for
  structured convex optimization problems}, Applied and Computational Harmonic
  Analysis,  (2015, accepted).

\bibitem{Lin-Ma-ZHang:admmrate}
{\sc Tianyi Lin, Shiqian Ma, and Shuzhong Zhang}, {\em On the convergence rate
  of multi-block admm}, manuscript,  (2014).

\bibitem{Lustig-Donoho-Pauly:MRM:07}
{\sc Michael Lustig, David Donoho, and John~M. Pauly}, {\em Sparse {MRI}: {T}he
  application of compressed sensing for rapid {MR} imaging}, Magnetic Resonance
  in Medicine, 58 (2007), pp.~1182--1195.

\bibitem{Micchelli-Shen-Xu:IP-11}
{\sc Charles~A. Micchelli, Lixin Shen, and Yuesheng Xu}, {\em Proximity
  algorithms for image models: Denoising}, Inverse Problems, 27 (2011),
  p.~045009(30pp).

\bibitem{moreau:RASPS:62}
{\sc Jean~J. Moreau}, {\em Fonctions convexes duales et points proximaux dans
  un espace hilbertien}, C.R. Acad. Sci. Paris S\'{e}r. A Math., 255 (1962),
  pp.~1897--2899.

\bibitem{nikolova:SIAM-AM:00}
{\sc Milla Nikolova}, {\em Local strong homogeneity of a regularized
  estimator}, SIAM Journal of Applied Mathematics, 61 (2000), pp.~633--658.

\bibitem{Rockafellar:SIAMCO:1976}
{\sc R.~Tyrrell Rockafellar}, {\em Monotone operators and the proximal point
  algorithm}, SIAM Journal on Control a
  nd Optimization, 14 (1976),
  pp.~877--898.

\bibitem{rudin:icip:94}
{\sc Leonid~I. Rudin and Stanley Osher}, {\em Total variation based image
  restoration with free local constraints}, in IEEE International Conference on  Image Processing, 1994, pp.~31--35.

\bibitem{Ruszczy1993Parallel}
{\sc Andrzej Ruszczynski}, {\em Parallel decomposition of multistage
  stochastic programming problems}, Mathematical Programming, 58 (1993),
  pp.~201--228.

\bibitem{Sawatzky2014Proximal}
{\sc Alex Sawatzky, Qi~Xu, Carsten~O. Schirra, and Mark~A. Anastasio}, {\em
  Proximal admm for multi-channel image reconstruction in spectral x-ray ct},
  Medical Imaging IEEE Transactions on, 33 (2014), pp.~1657 -- 1668.

\bibitem{Shi-Ling-Wu-Yin:siamjopt:2015}
{\sc Wei Shi, Wei Ling, Wei Wu, and Wotao Yin}, {\em Extra: an exact
  first-order algorithm for decentralized consensus optimization}, SIAM Journal
  on Optimization,  (2015).
\newblock To appear.

\bibitem{Sun-Toh-Yang:2014A}
{\sc Defeng Sun, Kim~Chuan Toh, and Liuqin Yang}, {\em A convergent 3-block
  semi-proximal alternating direction method of multipliers for conic
  programming with $4$-type of constraints}, SIAM Journal on Optimization, 25
  (2014).

\bibitem{Tibshirani2005Sparsity}
{\sc Robert Tibshirani, Michael Saunders, Saharon Rosset, Ji~Zhu, and Keith
  Knight}, {\em Sparsity and smoothness via the fused lasso}, Journal of the
  Royal Statistical Society, 67 (2005), pp.~91--108.

\bibitem{Tibshirani:Lasso1996Regression}
{\sc Robert Tibshirani}, {\em Regression shrinkage and selection via the lasso}, Journal of the Royal Statistical Society, 58 (1996),
  pp.~267--288.

\bibitem{Wen-Yin:ADMMSemiDef:MathProg2010}
{\sc Zaiwen Wen, Donald Goldfarb, and Wotao Yin}, {\em Alternating direction
  augmented lagrangian methods for semidefinite programming}, Mathematical
  Programming Computation, 2 (2010), pp.~203--230.

\end{thebibliography}

\end{document}